\newtheorem{theorem}{Theorem}[section]
\newtheorem{lemma}{Lemma}[section]
\numberwithin{equation}{section}
\def\ps@pprintTitle{%
  \let\@oddhead\@empty
 \let\@evenhead\@empty
  \def\@oddfoot{\reset@font\hfil\thepage\hfil}
 \let\@evenfoot\@oddfoot
}
\begin{document}
\begin{frontmatter}
\title{\textbf{Standing waves for the Chern-Simons-Schr\"{o}dinger equation with critical exponential growth}}

\author{Chao Ji $^{\mbox{a},}$\footnote[2]{ E-mail: jichao@ecust.edu.cn}}
\author{Fei Fang $^{\mbox{b},}$\footnote[1]{Corresponding author. E-mail: fangfei68@163.com}{} }

\address{ $^{a}$ Department of Mathematics, East China University of Science and Technology\\ Shanghai, 200237,  China}

\address{ $^{\mbox{b}}$ Department of Mathematics, Beijing Technology and Business University\\ Beijing, 100048, China}

\begin{abstract}
In this paper, by combing the variational methods and Trudinger-Moser inequality, we study the existence and multiplicity of the positive standing wave for the following Chern-Simons-Schr\"{o}dinger equation
\begin{equation}
-\Delta u+u +\lambda\left(\int_{0}^{\infty}\frac{h(s)}{s}u^{2}(s)ds+\frac{h^{2}(\vert x\vert)}{\vert x\vert^{2}}\right)u=f(x,u)+\epsilon k(x)\quad\quad \text{ in}\,\,\mathbb{R}^2, \\
\end{equation}
where $h(s)=\int_{0}^{s}\frac{l}{2}u^{2}(l)dl$, $\lambda>0$ and the nonlinearity  $f:\mathbb{R}^2\times \mathbb{R}\rightarrow \mathbb{R}$ behaves like $\text{exp}(\alpha\vert u\vert^{2})$ as
$\vert u\vert\rightarrow \infty$. For the case $\epsilon=0$, we can get a mountain-pass type solution.

\end{abstract}

\begin{keyword}  {Chern-Simos gauge field, Schr\"{o}dinger equation, Critical exponential growth, Variational methods.}

\noindent\textbf{Mathematics Subject Classification (2010)}: 35Q55; 35J20; 35B30 ,
\end{keyword}
\end{frontmatter}
\section{Introduction}
In this paper, we are concerned with the following nonlinear Chern-Simons-Schr\"{o}dinger system
\begin{align}
 \left\{
\begin{aligned}
&iD_{0}\phi+(D_{1}D_{1}+D_{2}D_{2})\phi=f(x, \phi),\\
&\partial_{0}A_{1}-\partial_{1}A_{0}=-\text{Im}(\bar{\phi}D_{2}\phi),\\
&\partial_{0}A_{2}-\partial_{2}A_{0}=\text{Im}(\bar{\phi}D_{1}\phi),\\
&\partial_{1}A_{2}-\partial_{2}A_{1}=-\frac{1}{2}\vert \phi\vert^{2},
\end{aligned}
\right.
\end{align}
where $i$ denotes the imaginary unit, $\partial_{0}=\frac{\partial}{\partial t}$, $\partial_{1}=\frac{\partial}{\partial x_{1}}$, $\partial_{1}=\frac{\partial}{\partial x_{2}}$ for $(t, x_{1}, x_{2})\in \mathbb{R}^{1+2}$, $\phi:\mathbb{R}^{1+2}\rightarrow \mathbb{C}$ is the complex
scalar field, $A_{\mu}:\mathbb{R}^{1+2}\rightarrow \mathbb{R}$ is the gauge field, $D_{\mu}=\partial_{\mu}+iA_{\mu}$ is the covariant derivative for $\mu=0, 1, 2$. This system was proposed in \cite{r11, r12} and consists of the Schrodinger equation augmented by the gauge field $A_{\mu}$. As usual in Chern-Simons theory,
this system is invariant under the following gauge transformation
\begin{align*}
 \phi\rightarrow\phi e^{i\chi}, \quad\quad\quad\quad A_{\mu}\rightarrow A_{\mu}-\partial_{\mu}\chi,
\end{align*}
where $\chi:\mathbb{R}^{1+2}\rightarrow \mathbb{R}$ is an arbitrary $C^{\infty}$ function.\\

In recent years, the Chern-Simons-Schr\"{o}dinger systems have received considerable
attention, these models are very important for the study of
the high-temperature superconductor, Aharovnov-Bohm scattering, and quantum Hall effect. In \cite{r4}, the authors investigated the system
(1.1) with power type nonlinearity, that is $f(x, u)=\lambda\vert u\vert^{p-2}u \,(\text{here}\,\, p>2, \lambda>0)$ and sought
the standing waves solutions to the system (1.1) of the form
\begin{align}
&\phi(t, x)=u(\vert x\vert)e^{i\omega t}, \quad\quad\quad A_{0}(x, t)=k(\vert x\vert),\nonumber\\
&A_{1}(x, t)=\frac{x_{2}}{\vert x\vert}h(\vert x\vert),\quad\quad\quad A_{2}(x, t)=-\frac{x_{1}}{\vert x\vert}h(\vert x\vert),
\end{align}
where $\omega>0$ is a given frequency and $u$, $k$ and $h$ are real value functions on $[0, +\infty)$ such that $h(0)=0$. Note that the ansatz
(1.2) satisfies the Coulomb gauge condition $\partial_{1}A_{1}+\partial_{2}A_{2}=0$. Inserting the
ansatz (1.2) into the system (1.1), the authors in \cite{r4} got the following nonlocal
semilinear elliptic equation for $u$
\begin{equation}
-\Delta u+\omega u +\left(\xi+\int_{\vert x\vert}^{\infty}\frac{h(s)}{s}u^{2}(s)ds\right)u+\frac{h^{2}(\vert x\vert)}{\vert x\vert^{2}}u=\lambda\vert u\vert^{p-2}u\quad\quad \text{ in}\,\,\mathbb{R}^{2},
\end{equation}
where $h(s)=\int_{0}^{s}\frac{l}{2}u^{2}(l)dl$ and $\xi$ is a constant. Moreover, they showed (1.3) is actually the Euler-Lagrange equation of the following functional
\begin{equation*}
I(u)=\frac{1}{2}\int_{\mathbb{R}^{2}}\vert \nabla u\vert^{2}+(\omega+\xi)u^{2}+\frac{u^{2}}{\vert x\vert^{2}}\left(\int_{0}^{\vert x\vert}\frac{s}{2}u^{2}(s)\right)^{2}dx-\frac{\lambda}{p}\int_{\mathbb{R}^{2}}\vert u\vert^{p}dx, \quad\quad u\in H^{1}_{r}(\mathbb{R}^{2})
\end{equation*}
here $H^{1}_{r}(\mathbb{R}^{2})$ denotes the set of radially symmetric functions in $H^{1}(\mathbb{R}^{2})$. They also showed that $I\in C^{1}(H^{1}_{r}(\mathbb{R}^{2}), \mathbb{R})$ and established some
existence results of standing waves by applying variational methods.\\

Another interesting result is in \cite{r15}, the authors studied whether $I$ is bounded from below or not for $p\in (1, 3)$. They proved the existence of a threshold value $\omega_{0}$ such that $I$ is bounded from below if $\omega\geq \omega_{0}$, and it is not for $\omega\in (0, \omega_{0})$. In fact, they given an explicit expression of $\omega_{0}$, namely:
\begin{equation*}
\omega_{0}=\frac{3-p}{3+p}3^{\frac{p-1}{2(3-p)}}2^{\frac{2}{3-p}}\Bigg(\frac{m^{2}(3+p)}{p-1}\Bigg)^{-\frac{p-1}{2(3-p)}},
\end{equation*}
with
\begin{equation*}
m=\int_{-\infty}^{+\infty}\left(\frac{2}{p+1}\coth^{2}\left(\frac{p-1}{2}r\right)\right)^{\frac{2}{1-p}}dr.
\end{equation*}
Moreover, in \cite{r6}, the authors studied the Chern-Simons-Schr\"{o}dinger system with the general nonlinearity which is a Berestycki, Gallou\"{e}t and Kavian type nonlinearity \cite{r1} and it is the planar version of the Berestycki-Lions type nonlinearity \cite{r2, r3}. In \cite{r18}, the authors researched the Chern-Simons-Schr\"{o}dinger system
without Ambrosetti-Rabinowitz condition. The other related research for system (1.1), we may refer to \cite{r13, r16, r20}. However, to our knowledge, the Chern-Simons-Schr\"{o}dinger system with critical exponential growth was not considered until now, that is, $f$ behaves like $\text{exp}(\alpha\vert u\vert^{2})$ as $\vert u\vert\rightarrow \infty$. More precisely, there exists $\alpha_{0}>0$ such that
\begin{equation*}
\underset{\vert s\vert\rightarrow \infty}{\lim}\frac{\vert f(x,s)\vert}{e^{\alpha s^{2}}}=0, \quad \forall \alpha>\alpha_{0},\quad \text{and}\quad \underset{\vert s\vert\rightarrow \infty}{\lim}\frac{\vert f(x,s)\vert}{e^{\alpha s^{2}}}=+\infty, \quad \forall \alpha<\alpha_{0}.
\end{equation*}
In order to study this class of problems,  the Trudinger-Moser inequalities are very important. If $\Omega$ be a bounded domain in $\mathbb{R}^{2}$, the authors in \cite{r14, r17} asserts that
\begin{equation*}
\exp(\alpha\vert u\vert^{2})\in L^{1}(\Omega), \quad\quad \forall u\in H_{0}^{1}(\Omega), \quad \alpha>0,
\end{equation*}
and there exists a constant $C>0$ such that
\begin{equation*}
\underset{\vert \nabla u\vert_{L^{2}(\Omega)}\leq 1}{\sup}\int_{\Omega}\exp(\alpha\vert u\vert^{2})\leq C<\infty , \quad\quad \text{if}\,\, \alpha\leq 4\pi.
\end{equation*}
Afterwards, Cao in \cite{r5} proved a version of Trudinger-Moser inequality in whole space in $\mathbb{R}^{2}$, which was improved by do \'{O} in \cite{r8}
\begin{equation}
\int_{\mathbb{R}^{2}}\Big(\exp(\alpha\vert u\vert^{2})-1\Big)dx<+\infty, \quad \forall u\in H^{1}(\mathbb{R}^{2}), \, \alpha>0.
\end{equation}
Moreover, if $\alpha< 4\pi$ and $\vert u\vert_{L^{2}(\mathbb{R}^{2})}\leq M$, then there exists a constant $C=C(M, \alpha)>0$ which depends only on
$M$ and $\alpha$, such that
\begin{equation}
\underset{\vert \nabla u\vert_{L^{2}(\mathbb{R}^{2})}\leq 1}{\sup}\int_{\mathbb{R}^{2}}\Big(\exp(\alpha\vert u\vert^{2})-1\Big)dx\leq C.
\end{equation}

In this paper, we will firstly study the existence of positive solution of the equation without the perturbation
\begin{equation}
-\Delta u+u +\lambda\left(\int_{0}^{\infty}\frac{h(s)}{s}u^{2}(s)ds+\frac{h^{2}(\vert x\vert)}{\vert x\vert^{2}}\right)u=f(x,u)\quad\quad \text{ in}\,\,\mathbb{R}^{2}. \\
\end{equation}
Combing Trudinger-Moser inequalities (1.4), (1.5) and mountain pass theorem, there exists $\lambda_{1}>0$, such that for any $0<\lambda<\lambda_{1}$,
we can get a positive and classical mountain-pass type solution.\\

Our next concern is problem (0.1). When the positive parameter $\lambda$ and $\epsilon$ are small enough, we can find a mountain-pass solution. Moreover, by combing Trudinger-Moser inequality and Ekeland's variational principle \cite{r10}, we can find a local minimal solution with negative energy.\\

Since we are interested in the positive solutions, we may assume $f:\mathbb{R}^{2}\times \mathbb{R}\rightarrow \mathbb{R}$ is continuous and
 $f(x, s)=0$ for $\mathbb{R}^{2}\times (-\infty, 0)$. Moreover, we assume the following growth conditions on the nonlinearity $f(x, s)$:\\
$(f_{1})$ $f(x, s)\leq Ce^{4\pi s^{2}}$, for all $(x, s)\in \mathbb{R}^{2}\times [0, \infty)$;\\
$(f_{2})$ $\underset{s\rightarrow 0}{\lim}\frac{f(x, s)}{s}=0$ uniformly with respect to  $x\in \mathbb{R}^{2}$;\\
$(f_{3})$ There is $0\leq\sigma<2$ such that
\begin{equation*}
 sf(x, s)-6F(x, s)\geq -\sigma s^{2}, \quad \text{for all}\, (x, s)\in \mathbb{R}^{2}\times [0, \infty),
\end{equation*}
where $F$ is the primitive of $f$.\\
$(f_{4})$ There exist constants $p>6$ and $C_{p}>0$ such that \\
\begin{equation*}
f(x, s)\geq C_{p}s^{p-1}, \quad\quad \text{for all}\,(x, t)\in \mathbb{R}^{2}\times[0, +\infty),
\end{equation*}
where
\begin{equation*}
C_{p}>\left[\frac{6(p-2)}{p(2-\sigma)}\right]^{\frac{p-2}{2}}S_{p}^{p}
\end{equation*}
and
\begin{equation*}
S_{p}=\underset{u\in H^{1}_{r}(\mathbb{R}^{2})\setminus {0}}{\inf}\frac{\Bigg(\int_{\mathbb{R}^{2}}(\vert \nabla u\vert^{2}+\vert u\vert^{2})dx\Bigg)^{\frac{1}{2}}}{\Bigg(\int_{\mathbb{R}^{2}}\vert u\vert^{p}dx\Bigg)^{\frac{1}{p}}}.
\end{equation*}

The following are main results of this paper.
\begin{theorem}\label{gt621}
Suppose hypotheses  $(f_{1})-(f_{4})$ hold, then there exists $\lambda_{1}>0$, such that for any $0<\lambda<\lambda_{1}$, problem
$(1.6)$ has a positive and classical solution of mountain-pass type.
\end{theorem}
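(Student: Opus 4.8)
The plan is to obtain solutions of $(1.6)$ as critical points of the energy functional
\[
I_\lambda(u)=\frac{1}{2}\int_{\mathbb{R}^2}\big(|\nabla u|^2+u^2\big)\,dx+\frac{\lambda}{2}\int_{\mathbb{R}^2}\frac{u^2}{|x|^2}\Big(\int_0^{|x|}\frac{s}{2}u^2(s)\,ds\Big)^2dx-\int_{\mathbb{R}^2}F(x,u)\,dx
\]
on the radial space $H^1_r(\mathbb{R}^2)$, where $F(x,s)=\int_0^s f(x,t)\,dt$. First I would check that $I_\lambda\in C^1(H^1_r(\mathbb{R}^2),\mathbb{R})$: the nonlocal Chern--Simons term is a continuously differentiable sextic form on $H^1_r$ (for radial functions the inner integral is pointwise controlled and the whole term is finite by radial decay estimates), while differentiability of $\int F(x,u)$ follows from $(f_1)$, $(f_2)$ together with the Trudinger--Moser inequality $(1.4)$. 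Writing $\|u\|^2=\int(|\nabla u|^2+u^2)$ and denoting the nonnegative nonlocal term by $N_\lambda(u)$, the scaling identity $\langle N_\lambda'(u),u\rangle=6N_\lambda(u)$ will be used repeatedly.

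Next I would verify the mountain-pass geometry. Since $F(x,0)=0$ we have $I_\lambda(0)=0$. For the barrier near the origin, $(f_2)$ and $(f_1)$ give, for every $\varepsilon>0$, a bound $F(x,s)\leq \varepsilon s^2+C_\varepsilon|s|^{q}\big(e^{4\pi s^2}-1\big)$ with $q>2$; combining a H\"older splitting with the subcritical Trudinger--Moser bound $(1.5)$ on the ball $\|\nabla u\|_2\leq 1$ shows $\int F(x,u)=o(\|u\|^2)$ as $\|u\|\to 0$, so there are $\rho,\delta>0$ with $I_\lambda(u)\geq\delta$ whenever $\|u\|=\rho$. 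For the descending direction, $(f_4)$ yields $F(x,s)\geq \frac{C_p}{p}s^p$ with $p>6$, so along a fixed $0\leq u_0\in H^1_r$ one has $I_\lambda(tu_0)\leq \frac{t^2}{2}\|u_0\|^2+t^6N_\lambda(u_0)-\frac{C_p t^p}{p}\|u_0\|_p^p\to-\infty$ as $t\to\infty$; this produces $e$ with $\|e\|>\rho$ and $I_\lambda(e)<0$, and I set $c_\lambda=\inf_{\gamma\in\Gamma}\max_{t\in[0,1]}I_\lambda(\gamma(t))$ in the usual way.

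The compactness analysis is where $(f_3)$ and $(f_4)$ enter decisively, and it is the step I expect to be the main obstacle. For a Palais--Smale sequence $(u_n)$ at level $c_\lambda$, the combination $I_\lambda(u_n)-\frac{1}{6}\langle I_\lambda'(u_n),u_n\rangle$ cancels the sextic nonlocal term exactly, and $(f_3)$ then gives $I_\lambda(u_n)-\frac{1}{6}\langle I_\lambda'(u_n),u_n\rangle\geq \frac{2-\sigma}{6}\|u_n\|^2$, so $(u_n)$ is bounded with $\|u_n\|^2\leq \frac{6}{2-\sigma}c_\lambda+o(1)$; this is precisely why the exponent $6$ and the factor $\frac{2-\sigma}{6}$ appear. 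The role of the explicit constant $C_p$ in $(f_4)$ is to force $c_\lambda$ below the critical Trudinger--Moser threshold: optimizing $t\mapsto I_\lambda(tu_*)$ along an optimal (or near-optimal) function $u_*$ for $S_p$, and choosing $\lambda$ small so that the contribution $t^6N_\lambda(u_*)$ is negligible, the hypothesis $C_p>\big[6(p-2)/(p(2-\sigma))\big]^{(p-2)/2}S_p^p$ is exactly what guarantees that the resulting level keeps the gradient norms $\|\nabla u_n\|_2^2$ strictly below the value at which $(1.5)$ ceases to apply with a subcritical exponent. Fixing $\lambda_1>0$ small enough for this margin to survive, and combining it with a concentration--compactness (Lions-type) argument to exclude vanishing and concentration, is the delicate point.

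Once the threshold is secured, I would pass to a weak limit $u_n\rightharpoonup u$ in $H^1_r$: the compact embedding $H^1_r(\mathbb{R}^2)\hookrightarrow L^q(\mathbb{R}^2)$ for $q>2$ handles the subcritical pieces, a generalized dominated convergence argument (justified by the uniform Trudinger--Moser bound) passes to the limit in $\int f(x,u_n)\varphi$, and weak lower semicontinuity together with the radial estimates controls the nonlocal term. This shows $u$ is a nonnegative critical point of $I_\lambda$, and the level estimate with $(f_4)$ forces $u\neq 0$. Finally, elliptic regularity applied to $(1.6)$ (bootstrapping via $(1.4)$ to get $f(x,u)\in L^r_{loc}$ for all $r$) upgrades $u$ to a classical solution, and the strong maximum principle, using $f(x,s)=0$ for $s\leq 0$ and the sign of the nonlocal coefficient, gives $u>0$.
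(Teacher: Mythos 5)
Your overall strategy coincides with the paper's: the same functional on $H^1_r(\mathbb{R}^2)$, the same mountain-pass geometry from $(f_1)$, $(f_2)$, $(f_4)$, boundedness of Palais--Smale sequences from the combination $J(u_n)-\tfrac{1}{6}J'(u_n)u_n$ together with $(f_3)$ (which cancels the sextic Chern--Simons term), and the level estimate $c<\tfrac{2-\sigma}{6}$ obtained from the calibrated constant $C_p$ in $(f_4)$ after taking $\lambda$ small, which yields $\limsup_n\Vert u_n\Vert^2\leq\tfrac{6c}{2-\sigma}<1$ and so places the sequence in the range where the subcritical Trudinger--Moser bound $(1.5)$ applies. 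Up to this point your sketch and the paper agree step by step.

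The gap is at the end. You assert that ``the level estimate with $(f_4)$ forces $u\neq 0$'', but the level estimate concerns the sequence $(u_n)$, not its weak limit: a Palais--Smale sequence at a positive level can in principle converge weakly to $0$, and nothing in your sketch excludes this. This is exactly where the paper does its real work. It first shows the weak limit $u_1$ is a critical point, using the $L^1$-convergence lemma of de Figueiredo--Miyagaki--Ruf (the paper's Lemma 2.2) together with the compactness lemma for the nonlocal term $c(u)$ from Byeon--Huh--Seok (Lemma 2.1); note that here one needs actual convergence $c'(u_n)\varphi\to c'(u_1)\varphi$ along weakly convergent sequences, which is a specific nonlocal compactness result, not the weak lower semicontinuity you invoke. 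It then writes $u_n=u_1+\omega_n$, applies the Br\'{e}zis--Lieb lemma, and uses $J'(u_n)u_n\to 0$ and $J'(u_1)u_1=0$ to reduce everything to proving $\int_{\mathbb{R}^2}f(x,u_n)\omega_n\,dx\to 0$; this last limit is where the bound $\limsup_n\Vert u_n\Vert^2<1$ is actually consumed, via H\"older, $(1.5)$, and the compact embedding $H^1_r(\mathbb{R}^2)\hookrightarrow L^{q'}(\mathbb{R}^2)$, $q'>2$. The conclusion is strong convergence $\Vert\omega_n\Vert\to 0$, hence $J(u_1)=c\geq d>0$ and $u_1\neq 0$. (Equivalently, the same estimate shows that if $u_1=0$ then $\Vert u_n\Vert\to 0$ and $c=0$, a contradiction.) Finally, your appeal to a Lions-type concentration--compactness argument is an unnecessary detour: in the radial setting the compact embedding plus the Trudinger--Moser threshold is all that is used --- but some version of the estimate above must be carried out, and it is the core of the proof rather than a routine passage to the limit.
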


\begin{theorem}\label{gt621}
Suppose hypotheses  $(f_{1})-(f_{4})$ hold, for any $0\leq h(x)\in H^{-1}$, then there exist $\lambda_{2}>0$ and $\epsilon_{1}>0$, such that for any $0<\lambda<\lambda_{2}$ and $0<\epsilon<\epsilon_{1}$, problem $(0.1)$ has at least two nonnegative solutions and one of them has a negative energy.
\end{theorem}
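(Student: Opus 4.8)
The plan is to work with the energy functional associated to (0.1) on the radial space $H^1_r(\mathbb{R}^2)$,
\begin{equation*}
I_\epsilon(u)=\frac12\|u\|^2+\frac{\lambda}{2}\int_{\mathbb{R}^2}\frac{h^2(|x|)}{|x|^2}u^2\,dx-\int_{\mathbb{R}^2}F(x,u)\,dx-\epsilon\int_{\mathbb{R}^2}ku\,dx,
\end{equation*}
where $\|u\|^2=\int_{\mathbb{R}^2}(|\nabla u|^2+u^2)\,dx$ and $k$ denotes the nonnegative datum in $H^{-1}$. By (1.4) the exponential term is finite and, as in the cited work, $I_\epsilon\in C^1$. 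The decisive structural fact is that the gauge term $N(u):=\frac12\int\frac{h^2(|x|)}{|x|^2}u^2\,dx$ is positive and homogeneous of degree six, so that $\langle N'(u),u\rangle=6N(u)$; this is precisely the scaling encoded in $(f_3)$. I would produce the two solutions separately: a local minimizer of negative energy via Ekeland's principle in a small ball, and a mountain-pass critical point of positive energy, recovering compactness in both cases from the Trudinger--Moser inequalities (1.4)--(1.5) once the relevant energy is kept below a sharp threshold.

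For the first solution, I would use $(f_1)$, $(f_2)$ together with (1.5) to absorb the exponential nonlinearity on a small ball and obtain, for $\|u\|\le\rho$ with $\rho<1$ small, an estimate of the form $I_\epsilon(u)\ge\frac14\|u\|^2-\epsilon\|k\|_{H^{-1}}\|u\|$. Fixing such a $\rho$ and then taking $\epsilon_1$ small forces $I_\epsilon\ge\alpha_0>0$ on the sphere $\|u\|=\rho$, while choosing any radial $v\ge0$ with $\langle k,v\rangle>0$ shows $I_\epsilon(tv)<0$ for small $t>0$, so that $c_0:=\inf_{\|u\|\le\rho}I_\epsilon<0$. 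Applying Ekeland's variational principle on the closed ball yields a bounded Palais--Smale sequence at level $c_0$; since we are in $H^1_r(\mathbb{R}^2)$ the embedding into $L^q$ $(q>2)$ is compact, the degree-six gauge term passes to the limit, and the constraint $\|\nabla u_n\|^2\le\|u_n\|^2\le\rho^2<1$ keeps the gradients below the Trudinger--Moser threshold, so $f(x,u_n)$ converges and the limit $u_1$ is a critical point with $I_\epsilon(u_1)=c_0<0$; in particular $u_1\not\equiv0$. Testing against $u_1^-$ and using $f(x,\cdot)=0$ on $(-\infty,0)$ gives $u_1\ge0$.

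For the second solution I would set up the mountain-pass geometry with base point $0$: the estimate above gives the barrier $I_\epsilon\ge\alpha_0>0$ on $\|u\|=\rho$, while $(f_4)$ (so $F(x,s)\ge\frac{C_p}{p}s^p$ with $p>6$ dominating the degree-six gauge term) yields a point $e$, $\|e\|>\rho$, with $I_\epsilon(e)<0$; hence $c_1\ge\alpha_0>0>c_0$, which already separates the two solutions. The heart of the matter is the boundedness and convergence of a Palais--Smale sequence at level $c_1$. Using $\langle N'(u),u\rangle=6N(u)$ and $(f_3)$,
\begin{equation*}
I_\epsilon(u_n)-\tfrac16\langle I_\epsilon'(u_n),u_n\rangle\ge\frac{2-\sigma}{6}\|u_n\|^2-\frac{5\epsilon}{6}\|k\|_{H^{-1}}\|u_n\|,
\end{equation*}
which, since $\sigma<2$, bounds $\|u_n\|$. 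To recover compactness I would show that $c_1$ lies below the threshold at which (1.5) guarantees uniform integrability of $e^{4\pi u_n^2}$; this is exactly where the explicit lower bound on $C_p$ in $(f_4)$ enters, through the one-dimensional maximization
\begin{equation*}
\max_{t>0}\Big(\frac{t^2}{2}\|w\|^2-\frac{C_p}{p}t^p\|w\|_p^p\Big)=\frac{p-2}{2p}\,\|w\|^{\frac{2p}{p-2}}\big(C_p\|w\|_p^p\big)^{-\frac{2}{p-2}},
\end{equation*}
evaluated on a near-extremal $w$ for $S_p$, which with the stated constant keeps $\max_{t}I_\epsilon(tw)$ strictly under the threshold.

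The main obstacle is precisely this critical-level estimate: in the critical exponential regime the Palais--Smale condition holds only below a sharp energy level, so everything hinges on choosing $\lambda_2$ and $\epsilon_1$ small enough---so that the positive gauge term and the linear perturbation do not push $c_1$ up---and on exploiting the explicit form of $C_p$ in $(f_4)$ to keep $c_1$ under that threshold. Once this is secured, the bounded Palais--Smale sequence converges (radial compactness for the subcritical and gauge terms, (1.5) for the exponential term) to a second critical point $u_2$ with $I_\epsilon(u_2)=c_1>0$; a maximum-principle argument gives $u_2\ge0$, and the strict ordering $I_\epsilon(u_2)>0>I_\epsilon(u_1)$ forces $u_1\neq u_2$, completing the proof.
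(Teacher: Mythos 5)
Your proposal is correct and follows essentially the same route as the paper: a mountain-pass solution whose level is kept below the compactness threshold $\tfrac{2-\sigma}{6}$ via the one-dimensional maximization over $t\mapsto \tfrac{t^2}{2}\|w\|^2-\tfrac{C_p}{p}t^p\|w\|_p^p$ at a near-extremal of $S_p$ together with the explicit bound on $C_p$ in $(f_4)$, plus a negative-energy local minimizer obtained by Ekeland's principle on a small ball, with boundedness of Palais--Smale sequences coming from $(f_3)$ and the degree-six homogeneity of the gauge term, compactness from Trudinger--Moser, and nonnegativity by testing with $u^-$. The only cosmetic differences are that the paper produces the descent direction $v$ by solving $-\Delta v+v=k$ via Riesz representation rather than picking $v\ge 0$ with $\langle k,v\rangle>0$ directly, and your write-up actually makes explicit the $\epsilon$-dependent correction $-\tfrac{5\epsilon}{6}\|k\|_{H^{-1}}\|u_n\|$ in the boundedness estimate, which the paper leaves implicit.
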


The paper is organized as follows. In Section 2 we are
concerned with the nonperturbation problem (1.4) and prove Theorem 1.1.
In Section 3,  the proof of Theorem 1.2 is given.\\

\textbf{Notations}. $C$, $C_{1}$, $C_{2}$ etc. will denote positive constants whose essential values are inessential. $H^{-1}$ is dual space of $H^{1}(\mathbb{R}^{2})$,  $C_{0, r}^{\infty}(\mathbb{R}^{2})$ denotes the space of infinitely differential radial functions with compact support in $\mathbb{R}^{2}$. $o_{n}(1)$ denotes a quantity which goes to zero. $B_{R}$ denotes the open ball centered at the origin and radius $R>0$ and
$\bar{B}_{R}$ is its closure. $u_{n}\rightarrow u$ and $u_{n}\rightharpoonup u$ denote the strong convergence and weak convergence of a sequence $\{u_{n}\}$ in a Banach space, respectively.\\

\section{Proof of Theorem 1.1}

From assumptions $(f_{1})$ and $(f_{2})$, for given $\eta>0$ small there exist positive constants $C_{\eta}$ and $\gamma>1$ such that
\begin{equation*}
 F(x, s)\leq\eta\frac{s^{2}}{2}+C_{\eta}\Big(e^{\gamma\pi s^{2}}-1\Big)\quad\quad\quad \text{for all}\,\, (x, s)\in \mathbb{R}^{2}\times \mathbb{R}.
\end{equation*}
Thus, by the Trudinger-Moser inequalities (1.4), we have $F(x, u)\in L^{1}(\mathbb{R}^{2})$ for all $u\in H^{1}_{r}(\mathbb{R}^{2})$. Therefore,
the functional
\begin{align*}
J(u)&=\frac{1}{2}\int_{\mathbb{R}^{2}}(\vert \nabla u\vert^{2}+u^{2})dx+\frac{\lambda}{2}\int_{\mathbb{R}^{2}}\frac{u^{2}}{\vert x\vert^{2}}\left(\int_{0}^{\vert x\vert}\frac{s}{2}u^{2}(s)\right)^{2}dx-\int_{\mathbb{R}^{2}}F(x, u)dx\\
&=\frac{1}{2}\int_{\mathbb{R}^{2}}(\vert \nabla u\vert^{2}+u^{2})dx+\frac{\lambda}{2}\int_{\mathbb{R}^{2}}\frac{u^{2}}{4\vert x\vert^{2}}\left(\frac{1}{2\pi}\int_{B_{\vert x\vert}}u^{2}\right)^{2}dx-\int_{\mathbb{R}^{2}}F(x, u)dx\\
&=\frac{1}{2}\int_{\mathbb{R}^{2}}(\vert \nabla u\vert^{2}+u^{2})dx+\frac{\lambda}{2}c(u)-\int_{\mathbb{R}^{2}}F(x, u)dx\\
&=J_{0}(u)+\frac{\lambda}{2}c(u)\quad\quad u\in H^{1}_{r}(\mathbb{R}^{2})
\end{align*}
is well defined. Furthermore, using standard arguments (see \cite{r19}) we can show that $J\in C^{1}(H^{1}_{r}(\mathbb{R}^{2}), \mathbb{R})$ with
\begin{equation*}
J'(u)\phi=\int_{\mathbb{R}^{2}} \nabla u\nabla \varphi+u\varphi dx+\lambda\int_{\mathbb{R}^{2}}\left(\int_{\vert x\vert}^{\infty}\frac{h(s)}{s}u^{2}(s)ds\right)u\varphi+\frac{h^{2}(\vert x\vert)}{\vert x\vert^{2}}u\varphi dx-\int_{\mathbb{R}^{2}}f(x, u)\varphi dx \quad\quad \varphi\in H^{1}_{r}(\mathbb{R}^{2}).
\end{equation*}
Consequently, each critical point of the functional $J$ is a solution of problem (1.6).\\

For functional $c(u)$, there is the following compactness lemma we use later.
\begin{lemma}(\emph{see \cite{r4}})
Suppose that a sequence  $\{u_{n}\}$ converges weakly to a function $u$ in $H^{1}_{r}(\mathbb{R}^{2})$ as $n\rightarrow \infty$. Then
for each $\varphi\in H^{1}_{r}(\mathbb{R}^{2})$, $c(u_{n})$, $c'(u_{n})\varphi$ and $c'(u_{n})u_{n}$ converges up to a subsequence to $c(u)$,
$c'(u)\varphi$ and $c'(u)u$, respectively, as $n\rightarrow \infty$.
\end{lemma}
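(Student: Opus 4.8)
The plan is to derive all three convergences from the compactness of the radial Sobolev embedding together with a Strauss-type pointwise decay estimate, and then to pass to the limit by a dominated convergence argument in each functional. Since $u_n\rightharpoonup u$ weakly in $H^1_r(\mathbb{R}^2)$, the sequence is bounded, say $\|u_n\|_{H^1}\le M$. The embeddings $H^1_r(\mathbb{R}^2)\hookrightarrow L^q(\mathbb{R}^2)$ are compact for every $q\in(2,\infty)$, and the local Rellich theorem gives compactness in $L^2(B_R)$ for each $R$; hence, after passing to a subsequence, $u_n\to u$ strongly in $L^q(\mathbb{R}^2)$ for all $q>2$, strongly in $L^2_{\mathrm{loc}}(\mathbb{R}^2)$, and $u_n\to u$ almost everywhere. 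I would also record the radial decay estimate $|v(x)|\le C\|v\|_{H^1}|x|^{-1/2}$, valid for $|x|\ge 1$ and every $v\in H^1_r(\mathbb{R}^2)$, which yields a uniform pointwise bound for the whole sequence in the far field.

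Next I would control the inner nonlocal factor. Writing $h_n(r)=\int_0^r\frac{s}{2}u_n^2(s)\,ds=\frac{1}{4\pi}\int_{B_r}u_n^2\,dx$, the $L^2$ bound gives the uniform estimate $0\le h_n(r)\le \frac{M^2}{4\pi}$ for all $r$ and all $n$, while the strong $L^2_{\mathrm{loc}}$ convergence yields $h_n(r)\to h(r)=\int_0^r\frac{s}{2}u^2(s)\,ds$ for each fixed $r$. Near the origin a H\"older estimate of the form $h_n(r)\le C\,r^{2-4/q}\|u_n\|_{L^q}^2$ shows that this factor vanishes at a controlled rate as $r\to0$, uniformly in $n$.

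Finally I would pass to the limit by dominated convergence, splitting every integral into the regions $\{|x|\le 1\}$ and $\{|x|\ge 1\}$. In radial form $c(u_n)=2\pi\int_0^\infty \frac{u_n^2(r)\,h_n^2(r)}{r}\,dr$; the integrand converges pointwise to $\frac{u^2 h^2}{r}$ by the preceding steps, it is dominated by $Cr^{-2}$ on $\{r\ge1\}$ (via the decay estimate and $h_n\le M^2/4\pi$), and on $\{r\le1\}$ the H\"older bound on $h_n$ together with the strong $L^q$ convergence gives equi-integrability, so a (generalized) dominated convergence argument yields $c(u_n)\to c(u)$. The derivatives
\[
c'(u_n)\varphi=2\int_{\mathbb{R}^2}\Big(\int_{|x|}^{\infty}\frac{h_n(s)}{s}u_n^2(s)\,ds\Big)u_n\varphi\,dx+2\int_{\mathbb{R}^2}\frac{h_n^2(|x|)}{|x|^2}u_n\varphi\,dx
\]
and $c'(u_n)u_n$ are handled identically: each factor converges (strongly in $L^q$, pointwise, or uniformly through the decay bound), each product is dominated on the two regions, and Fubini recasts the double integral into the radial form above so that the same estimates apply. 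The main obstacle is precisely this uniform domination of the nonlocal double integral --- simultaneously taming the singular weight $1/|x|^2$ near the origin and the slow decay at infinity --- which is where both the radial decay estimate and the compactness of $H^1_r(\mathbb{R}^2)$ are indispensable, since no analogous convergence holds on the full space $H^1(\mathbb{R}^2)$.
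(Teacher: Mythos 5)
The paper itself gives no proof of this lemma: it is imported verbatim from \cite{r4}, so the only "in-paper proof" is the citation. Measured against the argument in that reference, your proposal is correct and follows essentially the same route: boundedness of $\{u_n\}$, the compact Strauss embedding $H^1_r(\mathbb{R}^2)\hookrightarrow L^q(\mathbb{R}^2)$ for $2<q<\infty$, the radial decay bound $|v(x)|\le C\Vert v\Vert_{H^1}|x|^{-1/2}$ for $|x|\ge 1$, the uniform bound $0\le h_n\le M^2/4\pi$ together with the H\"older bound $h_n(r)\le C r^{2-4/q}\Vert u_n\Vert_{L^q}^2$, and a splitting of all integrals at $|x|=1$ with generalized dominated convergence. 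The one place you are genuinely sketchy is the derivative terms, where "handled identically" hides two steps that each deserve a line. First, the inner factor $\int_{|x|}^{\infty}\frac{h_n(s)}{s}u_n^2(s)\,ds$ must itself be shown to converge pointwise in $x$, which requires its own dominated-convergence argument; your stated bounds do supply the domination ($u_n^2(s)\le CM^2/s$ on $s\ge 1$, and $h_n(s)/s\le Cs^{1-4/q}$ near the origin), but this should be said. Second, after Fubini the near-origin control of the nonlocal term involves the extra factor $\int_0^s r\,u_n\varphi\,dr$, and the estimates close most cleanly with the specific choice $q=4$: then $h_n(s)\le Cs\Vert u_n\Vert_{L^4}^2$ and $\int_0^s r\,u_n\varphi\,dr\le Cs\Vert u_n\Vert_{L^4}\Vert\varphi\Vert_{L^4}$, so the radial integrand on $(0,1)$ is dominated by $Cs\,u_n^2(s)$, which converges in $L^1(0,1)$ by the strong $L^2(B_1)$ convergence, and generalized dominated convergence applies without any appeal to equi-integrability. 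These are routine completions rather than gaps in the idea; the proof strategy is sound and matches the cited one.
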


In order to show that the weak limit of a sequence in $H^{1}_{r}(\mathbb{R}^{2})$  is a weak solution of problem (1.6), we need the following convergence
result.
\begin{lemma}(\emph{see \cite{r7}})
Assume that $\Omega \subset \mathbb{R}^{2}$ be a bounded domain and $f:\Omega \times \mathbb{R}\rightarrow \mathbb{R}$ a continuous function. Let $(u_{n})$ be a sequence of functions in $L^{1}(\Omega)$
converging to $u$ in $L^{1}(\Omega)$. Assume that $f(x, u_{n})$ and $f(x, u)$ are also $L^{1}$ functions. If
\begin{equation*}
\int_{\Omega}\vert f(x, u_{n})u_{n}\vert dx\leq C_{1},
\end{equation*}
then $f(x, u_{n})$ converges in $L^{1}$ to $f(x, u)$.
\end{lemma}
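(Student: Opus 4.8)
The plan is to avoid searching for a single integrable dominating function --- which the hypotheses do not supply --- and instead to combine almost-everywhere convergence with an equi-integrability estimate and then invoke the Vitali convergence theorem. The quantitative bound $\int_{\Omega}\vert f(x,u_{n})u_{n}\vert\,dx\le C_{1}$ is what I would use to control the region where $\vert u_{n}\vert$ is large.

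First I would pass to a subsequence so that $u_{n}\to u$ almost everywhere in $\Omega$, which is legitimate because $u_{n}\to u$ in $L^{1}(\Omega)$. Since $f$ is continuous in its second variable, this immediately yields $f(x,u_{n})\to f(x,u)$ almost everywhere in $\Omega$.

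The core step is to verify that the family $\{f(x,u_{n})\}$ is uniformly integrable. Given a measurable set $A\subseteq\Omega$, I split it as $A=(A\cap\{\vert u_{n}\vert>M\})\cup(A\cap\{\vert u_{n}\vert\le M\})$. On the first piece the elementary inequality $\vert f(x,u_{n})\vert\le M^{-1}\vert f(x,u_{n})u_{n}\vert$ gives $\int_{A\cap\{\vert u_{n}\vert>M\}}\vert f(x,u_{n})\vert\,dx\le C_{1}/M$, which is small uniformly in $n$ once $M$ is large. On the second piece, continuity of $f$ furnishes a finite bound $K_{M}=\sup\{\vert f(x,s)\vert:x\in\overline{\Omega},\ \vert s\vert\le M\}$, so that $\int_{A\cap\{\vert u_{n}\vert\le M\}}\vert f(x,u_{n})\vert\,dx\le K_{M}\vert A\vert$, which is small once $\vert A\vert$ is small. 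Taking $A=\Omega$ in the same splitting also shows $\sup_{n}\int_{\Omega}\vert f(x,u_{n})\vert\,dx<\infty$. Together these give the equi-absolute continuity and $L^{1}$-boundedness that constitute uniform integrability.

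Finally, almost-everywhere convergence together with uniform integrability lets me apply the Vitali convergence theorem to conclude $f(x,u_{n})\to f(x,u)$ in $L^{1}(\Omega)$ along the chosen subsequence. To recover convergence of the full sequence I would use the standard subsequence argument: since the almost-everywhere limit is always the fixed function $f(x,u)$, every subsequence of $\{f(x,u_{n})\}$ admits a further subsequence converging to $f(x,u)$ in $L^{1}(\Omega)$, whence the whole sequence converges. The main obstacle is the uniform-integrability step, where the absence of any pointwise domination forces one to exploit the boundedness of $\int_{\Omega}\vert f(x,u_{n})u_{n}\vert$ precisely to absorb the contributions from large values of $\vert u_{n}\vert$, with continuity handling the complementary bounded region.
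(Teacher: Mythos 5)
Your proof is correct. Note first that the paper gives no proof of this lemma: it is quoted from de Figueiredo--Miyagaki--Ruf \cite{r7}, so the comparison is with the argument in that source. The key mechanism is the same in both: the Chebyshev-type bound $\vert f(x,u_{n})\vert\le M^{-1}\vert f(x,u_{n})u_{n}\vert$ on $\{\vert u_{n}\vert>M\}$, which turns the hypothesis $\int_{\Omega}\vert f(x,u_{n})u_{n}\vert\,dx\le C_{1}$ into tail control, together with the bound $K_{M}$ on the complementary set where $\vert u_{n}\vert\le M$. The packaging, however, differs. The original proof fixes $M$ and estimates $\int_{\Omega}\vert f(x,u_{n})-f(x,u)\vert\,dx$ directly, decomposing $\Omega$ into $\{\vert u_{n}\vert\ge M\}$, $\{\vert u\vert\ge M\}$ and the remaining set; this forces an extra Fatou-lemma step (to get $\int_{\Omega}\vert f(x,u)u\vert\,dx\le C_{1}$, so that the tail of the limit function can also be absorbed), a dominated convergence argument on the region where both functions are bounded, and a separate treatment of cross terms such as $\{\vert u\vert\ge M,\ \vert u_{n}\vert<M\}$. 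Your route --- upgrading the two estimates to uniform integrability of $\{f(\cdot,u_{n})\}$ and invoking Vitali's convergence theorem --- makes those extra steps unnecessary, since Vitali delivers the integrability and the convergence of the limit automatically; you also make explicit the subsequence-of-subsequences argument needed to pass from the a.e.-convergent subsequence back to the full sequence, which the original leaves implicit. One caveat shared by both proofs: finiteness of $K_{M}=\sup\{\vert f(x,s)\vert:\ x\in\overline{\Omega},\ \vert s\vert\le M\}$ tacitly assumes that $f$ is continuous (or at least locally bounded) up to $\partial\Omega$; with $f$ continuous only on the open set $\Omega\times\mathbb{R}$ the lemma as literally stated can fail (take $f$ blowing up at $\partial\Omega$ and $u_{n}$ supported on thin annuli approaching the boundary), so this stronger reading of the hypothesis is needed by your argument and by the original alike.
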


In order to construct the mountain-pass geometry of the functional $J$, we need next two lemmas.
\begin{lemma}(\emph{see \cite{r9}})
Let  $\beta>0$ and $r>1$. Then for each $\alpha>r$ there exists a positive constant $C=C(\alpha)$ such that for all $s\in \mathbb{R}$,
\begin{equation*}
\Big(e^{\beta s^{2}}-1\Big)^{r}\leq C\Big(e^{\alpha\beta s^{2}}-1\Big).
\end{equation*}
In particular, if $u\in H^{1}(\mathbb{R}^{2})$ then $\Big(e^{\beta s^{2}}-1\Big)^{r}$ belongs to $L^{1}(\mathbb{R}^{2})$.
\end{lemma}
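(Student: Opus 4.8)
The plan is to reduce the claimed pointwise inequality to a one-variable boundedness statement, and then to deduce the $L^{1}$ membership from the whole-space Trudinger-Moser inequality (1.4). First I would substitute $t=\beta s^{2}\geq 0$, which turns the desired estimate into $(e^{t}-1)^{r}\leq C(e^{\alpha t}-1)$ for all $t\geq 0$. Both sides vanish at $t=0$, so it suffices to bound the quotient
\[
g(t)=\frac{(e^{t}-1)^{r}}{e^{\alpha t}-1}, \qquad t>0,
\]
which is continuous and strictly positive on $(0,\infty)$, and then to set $C:=\sup_{t>0}g(t)$.

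The heart of the matter is showing that $g$ is bounded on $(0,\infty)$, and the key is its behavior at the two endpoints. As $t\to 0^{+}$, the expansions $e^{t}-1=t+o(t)$ and $e^{\alpha t}-1=\alpha t+o(t)$ give $g(t)\sim t^{\,r-1}/\alpha\to 0$; here the hypothesis $r>1$ is exactly what forces $t^{\,r-1}\to 0$. As $t\to +\infty$, the dominant exponentials yield $g(t)\sim e^{(r-\alpha)t}\to 0$, and here the hypothesis $\alpha>r$ is precisely what guarantees the decay. Since $g$ is continuous and positive on $(0,\infty)$ with $g(t)\to 0$ at both ends, it attains a finite supremum $C=C(\alpha)$, which establishes the pointwise inequality for all $s\in\mathbb{R}$.

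For the ``in particular'' assertion, I would apply the inequality pointwise with $s$ replaced by $u(x)$, obtaining $(e^{\beta u^{2}}-1)^{r}\leq C(e^{\alpha\beta u^{2}}-1)$ almost everywhere. Integrating over $\mathbb{R}^{2}$ and invoking the Trudinger-Moser inequality (1.4) with exponent $\alpha\beta>0$ gives $\int_{\mathbb{R}^{2}}(e^{\alpha\beta u^{2}}-1)\,dx<+\infty$, hence $(e^{\beta u^{2}}-1)^{r}\in L^{1}(\mathbb{R}^{2})$ for every $u\in H^{1}(\mathbb{R}^{2})$.

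I do not expect a serious obstacle. The only delicate point is the asymptotic matching as $t\to 0^{+}$, where one must confirm that the power $t^{r}$ in the numerator dominates the linear vanishing $\alpha t$ of the denominator; this is guaranteed precisely by $r>1$. Everything else is routine continuity together with the already-cited inequality (1.4).
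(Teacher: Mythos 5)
Your proof is correct. Note that the paper itself offers no argument for this lemma at all --- it is quoted verbatim from reference \cite{r9} (do \'{O}, Medeiros, Severo) --- so there is no in-paper proof to compare against; your write-up supplies exactly the standard argument behind that citation: reduce to $t=\beta s^{2}\geq 0$, study the quotient $g(t)=(e^{t}-1)^{r}/(e^{\alpha t}-1)$, and use $r>1$ to kill the endpoint $t\to 0^{+}$ and $\alpha>r$ to kill the endpoint $t\to\infty$, after which continuity gives a finite supremum independent of $\beta$; the $L^{1}$ claim then follows from inequality (1.4) applied with exponent $\alpha\beta$. All steps check out, including the subtle point that $C$ depends only on $\alpha$ (and the fixed $r$) because $\beta$ is scaled away in the substitution. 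For what it is worth, there is an even more economical route: for $r\geq 1$ and $t\geq 0$ one has the pointwise inequality $\bigl(e^{t}-1\bigr)^{r}\leq e^{rt}-1$ (compare derivatives, using $e^{t}(e^{t}-1)^{r-1}\leq e^{rt}$), and then monotonicity in the exponent gives $e^{rt}-1\leq e^{\alpha t}-1$; this yields the lemma with the explicit constant $C=1$ and needs only $\alpha\geq r$, whereas your compactness-of-the-quotient argument produces a non-explicit constant but generalizes more readily to perturbed exponents.
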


\begin{lemma}(\emph{see \cite{r9}})
Suppose  $u\in H^{1}(\mathbb{R}^{2})$, $\beta>0$, $q>0$ and $\Vert v\Vert\leq M$ with $\beta M^{2}< 4\pi$, then there exists $C=C(\beta, M, q)>0$ such that
\begin{equation*}
\int_{\mathbb{R}^{2}}\Big(e^{\beta v^{2}}-1\Big)\vert v\vert^{q}dx\leq C\Vert v\Vert^{q}.
\end{equation*}
\end{lemma}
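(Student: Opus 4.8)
The plan is to separate the exponential weight from the polynomial factor by Hölder's inequality and then estimate each piece with the preparatory lemmas. First I would discard the trivial case $v=0$ and assume $v\neq0$. Since the hypothesis $\beta M^2<4\pi$ gives $4\pi/(\beta M^2)>1$, there is room to fix two exponents: I would choose $r>1$ close enough to $1$ that simultaneously $r<4\pi/(\beta M^2)$ and $qr'\geq 2$, where $r'=r/(r-1)$ is the conjugate exponent (the second condition is achievable because $r'\to\infty$ as $r\to 1^+$); then I would pick $\alpha$ with $r<\alpha<4\pi/(\beta M^2)$, so that in particular $\alpha\beta M^2<4\pi$.

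With these exponents fixed, Hölder's inequality gives
\[
\int_{\mathbb{R}^2}\big(e^{\beta v^2}-1\big)|v|^q\,dx\leq\left(\int_{\mathbb{R}^2}\big(e^{\beta v^2}-1\big)^r\,dx\right)^{1/r}\left(\int_{\mathbb{R}^2}|v|^{qr'}\,dx\right)^{1/r'}.
\]
For the second factor, since $qr'\geq 2$, the Sobolev embedding $H^1(\mathbb{R}^2)\hookrightarrow L^{qr'}(\mathbb{R}^2)$ yields $\big(\int_{\mathbb{R}^2}|v|^{qr'}\,dx\big)^{1/r'}\leq C\|v\|^q$, which already furnishes the desired power of $\|v\|$.

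For the first factor I would invoke Lemma 2.3 with the chosen $\alpha>r$ to obtain $\big(e^{\beta v^2}-1\big)^r\leq C\big(e^{\alpha\beta v^2}-1\big)$ pointwise, reducing the problem to bounding $\int_{\mathbb{R}^2}\big(e^{\alpha\beta v^2}-1\big)\,dx$ by a constant independent of $v$. To do this I would normalize, writing $v=\|v\|\,w$ with $\|w\|=1$, so that $|\nabla w|_{L^2}\leq 1$ and $|w|_{L^2}\leq 1$; then $\alpha\beta v^2=(\alpha\beta\|v\|^2)\,w^2$ with exponent $\alpha\beta\|v\|^2\leq\alpha\beta M^2<4\pi$. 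Because $t\mapsto e^{tw^2}-1$ is increasing, monotonicity lets me replace the exponent by the fixed value $\alpha\beta M^2$, and the Trudinger-Moser inequality $(1.5)$ then bounds $\int_{\mathbb{R}^2}\big(e^{\alpha\beta M^2 w^2}-1\big)\,dx$ by a constant depending only on $\alpha,\beta,M$. Combining the two factors gives the claim with $C=C(\beta,M,q)$.

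I expect the only delicate point to be the joint choice of $r$ and $\alpha$: one must keep $\alpha\beta M^2$ strictly below the critical threshold $4\pi$ so that $(1.5)$ applies, while simultaneously forcing $qr'\geq 2$ so that the Sobolev embedding is available (this constraint is genuine precisely when $q$ is small). Both are arranged by taking $r$ sufficiently close to $1$, exploiting the strict inequality $\beta M^2<4\pi$. The normalization by the full $H^1$ norm $\|v\|$ is what makes the two Trudinger-Moser hypotheses $|\nabla w|_{L^2}\leq 1$ and $|w|_{L^2}\leq 1$ hold together cleanly.
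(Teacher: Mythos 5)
Your proof is correct, and it is essentially the standard argument: the paper itself states this lemma without proof (quoting it from the reference of do \'{O}, Medeiros and Severo), and the proof given there proceeds exactly as you do, via H\"older's inequality, the pointwise estimate of Lemma 2.3 with exponents $1<r<\alpha<4\pi/(\beta M^{2})$, the normalization $v=\Vert v\Vert w$ with monotonicity in the exponent, and the Trudinger--Moser bound (1.5) together with the embedding $H^{1}(\mathbb{R}^{2})\hookrightarrow L^{qr'}(\mathbb{R}^{2})$. Your attention to the joint choice of $r$ and $\alpha$ (keeping $\alpha\beta M^{2}<4\pi$ while forcing $qr'\geq 2$) is precisely the point that makes the argument work for small $q$.
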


\begin{lemma}
Let $(f_{1})$, $(f_{2})$ hold. Then functional $J$ satisfy the mountain pass geometry:
\begin{description}
  \item[(1)] There exists $\rho>0$ small enough, such that $\underset{\Vert u\Vert=\rho}{\inf}J(u)\geq d>0$;
  \item[(2)] There exists $u_{0}\in  H^{1}_{r}(\mathbb{R}^{2})$ with $\Vert u_{0}\Vert> \rho$, such that $J(u_{0})<0$.
\end{description}
\end{lemma}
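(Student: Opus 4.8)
The plan is to establish the two conditions of the mountain pass geometry separately, exploiting that the Chern-Simons term $c(u)$ is nonnegative and homogeneous of degree six.

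For part (1), I would start from the growth estimate already derived from $(f_1)$ and $(f_2)$. Refining it by combining the sublinearity of $f$ at the origin given by $(f_2)$ with the exponential bound $(f_1)$, one obtains, for any $\eta>0$ and suitable $q>2$, $\gamma>1$, a pointwise bound of the form $F(x,s)\le \frac{\eta}{2}s^2 + C_\eta |s|^q\big(e^{\gamma\pi s^2}-1\big)$. Integrating and applying H\"older's inequality with conjugate exponents $r,r'$ splits the exponential term into $\big\||u|^q\big\|_{r'}$ times $\big\|e^{\gamma\pi u^2}-1\big\|_{L^r}$. By Lemma 2.3, $\big(e^{\gamma\pi u^2}-1\big)^r\le C\big(e^{\alpha\gamma\pi u^2}-1\big)$ for $\alpha>r$, and provided $\rho$ is chosen so small that $\alpha\gamma\pi\rho^2<4\pi$, the Trudinger-Moser inequality $(1.5)$, applied to $u/\Vert u\Vert$, bounds this integral by a constant uniformly on the sphere $\Vert u\Vert=\rho$. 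The remaining factor is controlled by the Sobolev embedding $H^1_r(\mathbb{R}^2)\hookrightarrow L^{qr'}(\mathbb{R}^2)$, yielding $\int_{\mathbb{R}^2}F(x,u)\,dx\le \frac{\eta}{2}\Vert u\Vert^2 + C\Vert u\Vert^q$. Since $c(u)\ge 0$, discarding it gives $J(u)\ge \frac{1-\eta}{2}\Vert u\Vert^2 - C\Vert u\Vert^q$; choosing $\eta<1$ and then $\rho$ small, the fact that $q>2$ makes the quadratic term dominate, so $\inf_{\Vert u\Vert=\rho}J(u)\ge d>0$.

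For part (2), I would fix any nonnegative $u\in H^1_r(\mathbb{R}^2)\setminus\{0\}$ and examine $J(tu)$ as $t\to+\infty$. A direct scaling computation shows $c(tu)=t^6 c(u)$, since each of the three quadratic factors in $c$ scales, producing $t^2\cdot t^4$. On the other hand, integrating $(f_4)$ gives $F(x,s)\ge \frac{C_p}{p}s^p$ for $s\ge 0$, so $\int_{\mathbb{R}^2}F(x,tu)\,dx\ge \frac{C_p}{p}t^p\int_{\mathbb{R}^2}u^p\,dx$. Hence $J(tu)\le \frac{t^2}{2}\Vert u\Vert^2 + \frac{\lambda}{2}t^6 c(u) - \frac{C_p}{p}t^p\int_{\mathbb{R}^2}u^p\,dx$. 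Because $p>6$, the negative $t^p$ term eventually overwhelms both the quadratic term and the degree-six Chern-Simons term, so $J(tu)\to -\infty$. Choosing $t_0$ large and setting $u_0=t_0 u$ gives $\Vert u_0\Vert>\rho$ and $J(u_0)<0$.

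The main obstacle, and essentially the only place requiring care, is part (1): one must control the critical exponential nonlinearity on the small sphere by staying strictly in the subcritical regime of Trudinger-Moser. This forces a coordinated choice of the parameters $\gamma>1$ from the growth bound, the H\"older exponent $r>1$ and $\alpha>r$ from Lemma 2.3, and finally $\rho$, so that $\alpha\gamma\pi\rho^2<4\pi$ holds simultaneously. The nonnegativity of $c(u)$ renders the Chern-Simons term harmless for the lower bound, while in part (2) the hypothesis $p>6$ is precisely what is needed to beat its degree-six growth.
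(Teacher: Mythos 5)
Your proposal is correct and follows essentially the same route as the paper's proof: the splitting $F(x,u)\le \eta u^{2}+C|u|^{q}\bigl(e^{4\pi u^{2}}-1\bigr)$, H\"older together with Lemma 2.3 and the Trudinger--Moser bound (1.5) on the small sphere, discarding the nonnegative term $c(u)$ for the lower bound, and the $t^{6}$ homogeneity of $c$ versus $p>6$ from $(f_{4})$ for the negative direction. Your version is in fact slightly more careful on two minor points the paper glosses over (applying (1.5) to $u/\Vert u\Vert$ with the explicit smallness condition on $\rho^{2}$, and choosing $u\ge 0$ in part (2) so that the bound from $(f_{4})$ actually applies).
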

\begin{proof}
(1) From $(f_{1})$, for any $\eta>0$, there exists $\delta>0$ such that $\vert u\vert< \delta$
\begin{equation}
 F(x, u)\leq \eta\vert u\vert^{2} \quad \text{for all}\, x\in \mathbb{R}^{2}.
\end{equation}
On the other hand, for $q>2$, by $(f_{2})$, there exists $C=C(q, \delta)$ such that $\vert u\vert\geq \delta$ implies
\begin{equation}
F(x, u) \leq C\vert u\vert^{q}\Big(\exp(4\pi u^{2})-1\Big) \quad \text{for all}\, x\in \mathbb{R}^{2}.
\end{equation}
Combing (2.1) and (2.2) yield
\begin{equation*}
F(x, u)\leq \eta \vert u\vert^{2}+C\vert u\vert^{q}\big(\exp(4\pi u^{2})-1\big) \quad \text{for all}\, (x, u)\in \mathbb{R}^{2}\times \mathbb{R}.
\end{equation*}
So, we have
\begin{align*}
J(u)&\geq \frac{1}{2}\Vert u\Vert^{2}-\int_{\mathbb{R}^{2}}F(x, u)dx\nonumber\\
&\geq  \frac{1}{2}\Vert u\Vert^{2}-\eta\int_{\mathbb{R}^{2}}\vert u\vert^{2}dx-C\int_{\mathbb{R}^{2}}\vert u\vert^{q}\big(\exp(4\pi u^{2})-1\big))dx\\
&\geq  \frac{1}{4}\Vert u\Vert^{2}-C\Big(\int_{\mathbb{R}^{2}}\vert u\vert^{qs}dx\Big)^{\frac{1}{s}}\Big(\int_{\mathbb{R}^{2}}\big(\exp(4\pi u^{2})-1\big))^{r}dx\Big)^{\frac{1}{r}}\\
&\geq  \frac{1}{4}\Vert u\Vert^{2}-C\Big(\int_{\mathbb{R}^{2}}\vert u\vert^{qs}dx\Big)^{\frac{1}{s}}\Big(\int_{\mathbb{R}^{2}}\big(\exp(4\alpha \pi u^{2})-1\big)dx\Big)^{\frac{1}{r}}\\
&\geq  \frac{1}{4}\Vert u\Vert^{2}-C\Vert u\Vert^{q}\geq d>0,
\end{align*}
where $\alpha>1$, $\Vert u\Vert=\rho$ small enough such that $4\pi\alpha\rho<4\pi$, $r>1$ close to 1, $s>1$ and $\frac{1}{r}+\frac{1}{s}=1$, $0<\eta\leq\frac{1}{4}$.\\

(2) Due to  $(f_{4})$, $F(x, u)\geq \frac{C_{p}}{p}\vert u\vert^{p}$. So, for any $u\in  H^{1}_{r}(\mathbb{R}^{2})$, we have
\begin{equation*}
J(u)\leq\frac{1}{2}\int_{\mathbb{R}^{2}}(\vert \nabla u\vert^{2}+u^{2})dx+\frac{\lambda}{2}\int_{\mathbb{R}^{2}}\frac{u^{2}}{\vert x\vert^{2}}\left(\int_{0}^{\vert x\vert}\frac{s}{2}u^{2}(s)\right)^{2}dx-\int_{\mathbb{R}^{2}}\frac{C_{p}}{p}\vert u\vert^{p}dx.
\end{equation*}
Fix $ u\in H^{1}_{r}(\mathbb{R}^{2})\setminus \{0\}$, we have
\begin{align*}
J(tu)\leq\frac{t^{2}}{2}\int_{\mathbb{R}^{2}}\vert \nabla u\vert^{2}+u^{2}dx+\frac{t^{6}}{2}\int_{\mathbb{R}^{2}}\frac{u^{2}}{\vert x\vert^{2}}\left(\int_{0}^{\vert x\vert}\frac{s}{2}u^{2}(s)\right)^{2}dx-\frac{C_{p}t^{p}}{p}\int_{\mathbb{R}^{2}}\vert u\vert^{p}dx.\nonumber
\end{align*}
Since $p>6$, there exists $t_{0}$ sufficiently large such that $\Vert t_{0}u\Vert>\rho$ and $J(t_{0}u)<0$. Set $u_{0}=t_{0}u$, we get the conclusion.
\end{proof}

By the mountain pass theorem (see \cite{r19}), there exists a Palais-Smale
sequence $\{u_{n}\}\subset  H^{1}_{r}(\mathbb{R}^{2})$ satisfying
\begin{align*}
J(u_{n})\rightarrow c\geq b\quad \text{and}\quad J'(u_{n})\rightarrow 0,
\end{align*}
where $c=\underset{\gamma\in\Gamma}{\inf}\underset{t\in [0, 1]}{\max}J(\gamma(t))>0$ and
\begin{align*}
\Gamma=\Big\{\gamma\in C([0, 1],  H^{1}_{r}(\mathbb{R}^{2})): \gamma(0)=0, J(\gamma(1))<0\Big\},
\end{align*}
shortly $\{u_{n}\}$ is a $(PS)_{c}$ sequence. Moreover, by the assumptions of $f$, we may assume that the sequence $\{u_{n}\}$ is nonnegative.

\begin{lemma}
If $(u_{n})\subset H^{1}_{r}(\mathbb{R}^{2})$ is a $(PS)_{c}$ sequence to $J$, then
$(u_{n})$ is bounded in $H^{1}_{r}(\mathbb{R}^{2})$.
\end{lemma}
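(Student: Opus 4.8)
The plan is to show that a $(PS)_c$ sequence $\{u_n\}$ for $J$ is bounded in $H^1_r(\mathbb{R}^2)$ by exploiting the scaling structure of the functional together with hypothesis $(f_3)$. The key observation is that the nonlocal term $c(u)$ is homogeneous of degree $6$ under the scaling $u \mapsto tu$, so that $c'(u)u = 6c(u)$ (this identity can be read off directly from the explicit form of $c(u)$ and is consistent with Lemma~2.1). I would therefore form the combination $6J(u_n) - J'(u_n)u_n$, in which the nonlocal contribution cancels completely: the $\tfrac{\lambda}{2}c(u_n)$ in $J$ contributes $3\lambda c(u_n)$, while the $\lambda c'(u_n)u_n$ in $J'(u_n)u_n$ contributes $6\lambda c(u_n)\cdot\tfrac{1}{2}=3\lambda c(u_n)$ (after accounting for the factor in the definition), so the two cancel. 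This is the natural generalization of the standard Ambrosetti–Rabinowitz argument to the sixth-order nonlocal setting, which is precisely why the constant $6$ appears in $(f_3)$.

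After this cancellation I expect to obtain an estimate of the form
\begin{align*}
6J(u_n) - J'(u_n)u_n &= (3-1)\int_{\mathbb{R}^2}\big(\vert\nabla u_n\vert^2 + u_n^2\big)\,dx + \int_{\mathbb{R}^2}\big(u_n f(x,u_n) - 6F(x,u_n)\big)\,dx.
\end{align*}
Using $(f_3)$, the last integral is bounded below by $-\sigma\int_{\mathbb{R}^2} u_n^2\,dx \geq -\sigma\Vert u_n\Vert^2$. On the other hand, the left-hand side is controlled above: since $J(u_n)\to c$ and $J'(u_n)\to 0$ in the dual norm, we have $6J(u_n) - J'(u_n)u_n \leq 6c + o_n(1) + \Vert J'(u_n)\Vert_{*}\Vert u_n\Vert \leq 6c + o_n(1) + o_n(1)\Vert u_n\Vert$. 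Combining the two sides yields
\begin{equation*}
2\Vert u_n\Vert^2 - \sigma\Vert u_n\Vert^2 \leq 6c + o_n(1) + o_n(1)\Vert u_n\Vert,
\end{equation*}
that is, $(2-\sigma)\Vert u_n\Vert^2 \leq 6c + o_n(1)\Vert u_n\Vert + o_n(1)$.

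Since $(f_3)$ guarantees $0\leq\sigma<2$, the coefficient $2-\sigma>0$ is strictly positive, and a quadratic-versus-linear comparison then forces $\{\Vert u_n\Vert\}$ to stay bounded. The main technical point I would want to verify carefully is the degree-six homogeneity identity $c'(u)u = 6c(u)$, since the whole cancellation and hence the role of the numerical constant $6$ in $(f_3)$ hinges on it; once that identity is in hand, the remaining estimates are routine. It is worth noting that no Ambrosetti–Rabinowitz superquadraticity condition on $f$ is assumed here—the growth condition $(f_3)$ plays exactly that role, and the strict inequality $\sigma<2$ is what is needed to absorb the kinetic term and close the argument.
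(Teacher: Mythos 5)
Your proof is correct and follows essentially the same route as the paper: both form the combination $6J(u_n)-J'(u_n)u_n$, use the degree-six homogeneity $c'(u)u=6c(u)$ to cancel the nonlocal term, and then apply $(f_3)$ with $2-\sigma>0$ to absorb the $L^2$ term and conclude boundedness. The homogeneity identity you flag for verification is exactly the structural fact implicit in the paper's formula for $J'$ (taken from the computation in the reference for Lemma 2.1), so there is no gap.
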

\begin{proof}
From $(f_{3})$, for $n$ large enough, we have
\begin{align*}
6c+1+\epsilon_{n}\Vert u_{n}\Vert\geq 6J(u_{n})-J'(u_{n})u_{n}&=2\Vert u_{n}\Vert^{2}+\int_{\mathbb{R}^{2}}\Big (u_{n}f(x, u_{n})-6F(x, u_{n})\Big)dx\\
&\geq 2\Vert u_{n}\Vert^{2}-\sigma\int_{\mathbb{R}^{2}}\vert u_{n}\vert^{2}dx\\
&\geq (2-\sigma)\Vert u_{n}\Vert^{2}
\end{align*}
where $\epsilon_{n}\rightarrow 0$, it implies the boundedness of $(u_{n})$.
\end{proof}

\begin{lemma}
Assume that $(f_{1})$ hold, there exists $\lambda_{1}>0$, then for any $0<\lambda<\lambda_{1}$, $d \leq c<\frac{2-\sigma}{6}$.
\end{lemma}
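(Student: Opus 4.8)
The plan is to establish the two inequalities separately. The lower bound $d\le c$ is an immediate consequence of the mountain pass geometry of Lemma 2.5: every $\gamma\in\Gamma$ joins $\gamma(0)=0$ to a point $\gamma(1)$ with $J(\gamma(1))<0$, while the estimate in part (1) gives $J(u)\ge\frac14\|u\|^2-C\|u\|^q\ge 0$ on the whole ball $\{\|u\|\le\rho\}$ for $\rho$ small and $J\ge d>0$ on the sphere $\{\|u\|=\rho\}$. Since $J(\gamma(1))<0$ forces $\|\gamma(1)\|>\rho$, continuity makes $\gamma$ meet the sphere at some $t^{*}$, whence $\max_{t}J(\gamma(t))\ge J(\gamma(t^{*}))\ge d$; taking the infimum over $\gamma$ yields $c\ge d$. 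The real content is the strict upper bound $c<\frac{2-\sigma}{6}$, which I would obtain by estimating $c$ along a single ray $t\mapsto tu$ for a well-chosen $u\in H^{1}_{r}(\mathbb{R}^{2})\setminus\{0\}$.

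First I would reduce the problem to a one-variable maximization. Using $(f_{4})$ (so that $F(x,s)\ge\frac{C_{p}}{p}s^{p}$) together with the homogeneity $c(tu)=t^{6}c(u)$, one has $J(tu)\le\phi_{\lambda}(t)$, where
\[
\phi_{\lambda}(t)=\frac{t^{2}}{2}\|u\|^{2}+\frac{\lambda t^{6}}{2}c(u)-\frac{C_{p}t^{p}}{p}\int_{\mathbb{R}^{2}}|u|^{p}\,dx .
\]
Choosing $t_{0}$ so large that $J(t_{0}u)<0$ and using the admissible path $\gamma(s)=st_{0}u$ shows $c\le\max_{t\ge0}\phi_{\lambda}(t)$, so it suffices to bound this scalar maximum. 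Next I would analyze the unperturbed function $\phi_{0}$ (the case $\lambda=0$): a direct computation gives $\max_{t\ge0}\phi_{0}(t)=\frac{p-2}{2p}\big(\frac{\|u\|^{p}}{C_{p}\int_{\mathbb{R}^2}|u|^{p}}\big)^{2/(p-2)}$, whose infimum over $u$ equals $\frac{p-2}{2p}\big(S_{p}^{p}/C_{p}\big)^{2/(p-2)}$ by the definition of $S_{p}$. Feeding in the explicit lower bound on $C_{p}$ from $(f_{4})$ collapses this to a value strictly below $\frac{2-\sigma}{12}$; in particular I can fix one $u$ for which $c_{0}:=\max_{t\ge0}\phi_{0}(t)<\frac{2-\sigma}{6}$, with a definite gap to spare.

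Finally I would treat the $\lambda$-term as a perturbation and show $\max_{t\ge0}\phi_{\lambda}(t)\to c_{0}$ as $\lambda\to0^{+}$. The key point, and the main obstacle, is that the maximizers $t_{\lambda}$ of $\phi_{\lambda}$ remain bounded as $\lambda\to0$: dividing the critical identity $\|u\|^{2}+3\lambda c(u)t_{\lambda}^{4}=C_{p}\big(\int_{\mathbb{R}^2}|u|^{p}\big)t_{\lambda}^{p-2}$ by $t_{\lambda}^{p-2}$, the hypothesis $p>6$ makes both terms on the left vanish if $t_{\lambda}\to\infty$, contradicting $C_{p}\int_{\mathbb{R}^2}|u|^{p}>0$. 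With $t_{\lambda}\le T$ uniformly in small $\lambda$, one gets $\phi_{\lambda}(t_{\lambda})\le c_{0}+\frac{\lambda}{2}c(u)T^{6}\to c_{0}$. Hence there is $\lambda_{1}>0$ such that $\max_{t\ge0}\phi_{\lambda}(t)<\frac{2-\sigma}{6}$ for all $0<\lambda<\lambda_{1}$, which gives $c<\frac{2-\sigma}{6}$. The only genuine work lies in this uniform boundedness step (where $p>6$ is essential, since the $t^{6}$ nonlocal term must be dominated by the $t^{p}$ nonlinearity) and in verifying the algebraic inequality produced by the precise constant in $(f_{4})$; the rest is routine.
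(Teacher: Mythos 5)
Your proof is correct and follows essentially the same route as the paper: both reduce the mountain-pass level to the maximum over $t\ge 0$ of the scalar function $\frac{t^{2}}{2}\Vert u\Vert^{2}+\frac{\lambda t^{6}}{2}c(u)-\frac{C_{p}t^{p}}{p}\int_{\mathbb{R}^{2}}\vert u\vert^{p}dx$ along a ray through a (near-)minimizer of $S_{p}$, and then invoke the explicit lower bound on $C_{p}$ in $(f_{4})$ to get strictly below $\frac{2-\sigma}{6}$. The only difference is one of completeness rather than of method: the paper simply asserts the existence of $\lambda_{1}$ with the doubled constant $\frac{p-2}{p}S_{p}^{2p/(p-2)}C_{p}^{-2/(p-2)}$, whereas you justify that perturbation step via the uniform boundedness of the maximizers $t_{\lambda}$ (using $p>6$), and your approximation argument also avoids needing $S_{p}$ to be attained, so your write-up in fact fills a small gap left implicit in the published proof.
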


\begin{proof}
According to Lemma 2.5, it is clear that $c\geq b$, so we only need to prove that $c<\frac{2-\sigma}{6}$.\\
Fix a positive function $v_{p}\in H^{1}_{r}(\mathbb{R}^{2})$ such that
\begin{equation*}
S_{p}=\frac{\Bigg(\int_{\mathbb{R}^{2}}\vert \nabla v_{p}\vert^{2}+\vert v_{p}\vert^{2}dx\Bigg)^{\frac{1}{2}}}{\Bigg(\int_{\mathbb{R}^{2}}\vert v_{p}\vert^{p}dx\Bigg)^{\frac{1}{p}}}.
\end{equation*}
Note that
\begin{align*}
\underset{t\geq 0}{\max}J_{0}(tv_{p})&\leq \underset{t\geq 0}{\max}\Bigg\{\frac{t^{2}}{2}\int_{\mathbb{R}^{2}}\vert \nabla v_{p}\vert^{2}+v_{p}^{2}dx-\frac{C_{p}t^{p}}{p}\int_{\mathbb{R}^{2}}\vert v_{p}\vert^{p}dx\Bigg\}\\
&=\frac{(p-2)}{2p}\frac{S_{p}^{\frac{2p}{p-2}}}{C_{p}^{\frac{2}{p-2}}}.\nonumber
\end{align*}
So, there exists $\lambda_{1}>0$ such that for any $0<\lambda<\lambda_{1}$, we have
\begin{equation*}
\underset{t\geq 0}{\max}J(tv_{p})\leq \frac{(p-2)}{p}\frac{S_{p}^{\frac{2p}{p-2}}}{C_{p}^{\frac{2}{p-2}}}.
\end{equation*}
Moreover, from $(f_{4})$
\begin{equation*}
\frac{(p-2)}{p}\frac{S_{p}^{\frac{2p}{p-2}}}{C_{p}^{\frac{2}{p-2}}}< \frac{2-\sigma}{6}.
\end{equation*}
So $c<\frac{2-\sigma}{6}$.
\end{proof}

\begin{proof}[\textbf{Proof of Theorem 1.1}]
From Lemma 2.5, we obtain a  nonnegative $(PS)_{c}$  sequence $\{u_{n}\}$ and from Lemma 2.6, this sequence is bounded, thus
for a subsequence still denoted by $\{u_{n}\}$ there is a nonnegative function $u_{1}\in H^{1}_{r}(\mathbb{R}^{2})$ such that $u_{n}\rightharpoonup u_{1}$ in
 $H^{1}_{r}(\mathbb{R}^{2})$ and $u_{n}\rightarrow u_{1}$ in $L_{loc}^{s}(\mathbb{R}^{2})$ for all $s\geq 1$ and $u_{n}\rightarrow u_{0}$ almost everywhere
 in $\mathbb{R}^{2}$. Now, from $(f_{3})$,
\begin{align*}
c&=\underset{n\rightarrow\infty}{\lim}J(u_{n})\\
&=\underset{n\rightarrow\infty}{\lim}\left[J(u_{n})-\frac{1}{6}J'(u_{n})u_{n}\right]\nonumber\\
&\geq \underset{n\rightarrow\infty}{\lim}\left(\frac{1}{3}\Vert u_{n}\Vert^{2}+\int_{\mathbb{R}^{2}}\left(\frac{1}{6}f(x, u_{n})u_{n}-F(x, u_{n})\right)dx\right)\\
&\geq \frac{2-\sigma}{6}\underset{n\rightarrow\infty}{\lim\sup}\Vert u_{n}\Vert^{2}\\
\end{align*}
which implies
\begin{equation*}
\underset{n\rightarrow\infty}{\lim\sup}\Vert u_{n}\Vert^{2}=m\leq \frac{6 c}{2-\sigma}< 1.
\end{equation*}
According to Trudinger-Moser inequality (1.5) and $(f_{1})$, for any bounded domain $\Omega$, $f(x, u_{1})$ and $f(x, u_{n})$ belong to $L^{1}(\Omega)$.
In virtue of $J'( u_{n}) u_{n}\rightarrow 0$ and $J(u_{n})\rightarrow c$ as $n\rightarrow\infty$, there exists a constant $C_{2}>0$ such that
\begin{equation*}
\int_{\mathbb{R}^{2}}F(x, u_{n})dx\leq C_{2} \quad \text{and}\quad \int_{\mathbb{R}^{2}}f(x, u_{n})u_{n}dx\leq C_{2}.
\end{equation*}
By $(f_{1})$, we have
\begin{align*}
\int_{\Omega} f(x, u_{n})u_{n} dx\leq C_{2}.
\end{align*}
Therefore, according to Lemma 2.2, one has
\begin{equation}
\underset{n\rightarrow \infty}{\lim}\int_{\mathbb{R}^{2}}f(x, u_{n})vdx=\int_{\mathbb{R}^{2}}f(x, u_{1})vdx\quad\quad \forall\, \varphi\in C^{\infty}_{0, r}(\mathbb{R}^{2}).
\end{equation}
Combing (2.3) and Lemma 2.1, we have
\begin{equation*}
\underset{n\rightarrow \infty}{\lim}J'(u_{n})v=0=J'(u_{1})v \quad\quad \forall\, v\in C^{\infty}_{0, r}(\mathbb{R}^{2}),
\end{equation*}
so $u_{0}$ is a solution of problem (1.6). \\

At last, we show that the sequence $(u_{n})$ has a convergent subsequence.\\

Set $u_{n}=u_{1}+\omega_{n}$, then $\omega_{n}\rightharpoonup 0$ in $H^{1}_{r}(\mathbb{R}^{2})$ and $\omega_{n}\rightharpoonup 0$ in $L^{r}(\mathbb{R}^{2})$ for all
$2<q<\infty$. By the Br\'{e}zis-Lieb Lemma (see \cite{r19}), we get
\begin{equation}
\Vert u_{n}\Vert^{2}=\Vert u_{1}\Vert^{2}+\Vert \omega_{n}\Vert^{2}+o_{n}(1).
\end{equation}
We firstly show that
\begin{equation}
\underset{n\rightarrow\infty}{\lim}\int_{\mathbb{R}^{2}}f(x, u_{n})u_{1}dx=\int_{\mathbb{R}^{2}}f(x, u_{1})u_{1}dx.
\end{equation}
In fact, since $C_{0, r}^{\infty}(\mathbb{R}^{2})$ is dense in $H^{1}_{r}(\mathbb{R}^{2})$, for any $\eta>0$ there exists $\varphi\in C_{0, r}^{\infty}(\mathbb{R}^{2})$
such that $\Vert u_{1}-\varphi\Vert<\eta$. Observe that
\begin{align*}
\Big\vert\int_{\mathbb{R}^{2}}f(x, u_{n})u_{1}dx-\int_{\mathbb{R}^{2}}f(x, u_{1})u_{1}dx\Big\vert &\leq\Big\vert\int_{\mathbb{R}^{2}}f(x, u_{n})(u_{1}-\varphi)dx\Big\vert+\Big\vert\int_{\mathbb{R}^{2}}f(x, u_{1})(u_{1}-\varphi) dx\Big\vert\\
&+\Vert \varphi\Vert_{\infty}\int_{\text{supp} \varphi}\Big\vert f(x, u_{n})-f(x, u_{1})\Big\vert dx.
\end{align*}
For the first integral, using that $\vert J'(u_{n})(u_{1}-\varphi)\vert\leq\eta_{n}\Vert u_{1}-\varphi\Vert$ with $\eta_{n}\rightarrow 0$
as $n\rightarrow \infty$ and Lemma 2.1, we get
\begin{align*}
\Big\vert\int_{\mathbb{R}^{2}}f(x, u_{n})(u_{1}-\varphi)dx\Big\vert &\leq\eta_{n}\Vert u_{1}-\varphi\Vert+\Big\vert\int_{\mathbb{R}^{2}}\nabla u_{n} \nabla(u_{1}-\varphi)+u_{n}(u_{1}-\varphi)dx\Big\vert+\vert c'(u_{n})( u_{1}-\varphi)\Big\vert\\
&\leq\eta_{n}\Vert u_{1}-\varphi\Vert+\Vert u_{n}\Vert \Vert  u_{1}-\varphi\Vert +\Big\vert \Big(c'(u_{n})-c'(u_{1})\Big)( u_{1}-\varphi)\Big\vert+\Big\vert c'(u_{1})( u_{1}-\varphi)\Big\vert\\
&\leq C_{3}\Vert u_{1}-\varphi\Vert\leq C_{3}\eta
\end{align*}
for $n$ large. Similarly, using that  $ J'(u_{1})(u_{1}-\varphi)=0$, we can estimate the second integral and obtain
\begin{align*}
\Big\vert\int_{\mathbb{R}^{2}}f(x, u_{1})(u_{1}-\varphi) dx\Big\vert\leq C_{3}\eta.
\end{align*}
Combing (2.3) and the previous inequality, we have
\begin{equation*}
\underset{n\rightarrow\infty}{\lim}\Big\vert \int_{\mathbb{R}^{2}}f(x, u_{n})u_{1}dx-\int_{\mathbb{R}^{2}}f(x, u_{1})u_{1}dx\Big\vert\leq 2C_{3}\eta,
\end{equation*}
this implies (2.5) because $\eta$ is arbitrary.\\
From (2.4) and Lemma 2.1, we can write
\begin{align*}
J'(u_{n})u_{n}&=\Vert u_{n}\Vert^{2}+c'(u_{n})u_{n}-\int_{\mathbb{R}^{2}}f(x, u_{n})u_{n}dx\\
&=\Vert u_{1}\Vert^{2}+\Vert \omega_{n}\Vert^{2}+c'(u_{1})u_{1}-\int_{\mathbb{R}^{2}}f(x, u_{n})u_{1}dx-\int_{\mathbb{R}^{2}}f(x, u_{n})\omega_{n}dx+o_{n}(1)\\
&=J'(u_{1})u_{1}+\Vert \omega_{n}\Vert^{2}-\int_{\mathbb{R}^{2}}f(x, u_{n})\omega_{n}dx+o_{n}(1),
\end{align*}
that is
\begin{align*}
\Vert \omega_{n}\Vert^{2}=\int_{\mathbb{R}^{2}}f(x, u_{n})\omega_{n}dx+o_{n}(1).
\end{align*}
According to Trudinger-Moser inequality (1.5), for $\tau>1$, $q>1$ close to 1 satisfying $\tau q\frac{(\mu-2)c}{2\mu}<1$, there exists $C_{4}>0$
such that the sequence $h_{n}(x)=e^{4\pi\tau u_{n}^{2}(x)}-1$ satisfies
\begin{align*}
\int_{\mathbb{R}^{2}}f(x, u_{n})\omega_{n}dx&\leq \eta\int_{\mathbb{R}^{2}}\vert u_{n}\omega_{n}\vert dx+C_{\eta}\int_{\mathbb{R}^{2}}\Big(e^{4\pi\tau u_{n}^{2}(x)}-1\Big)\vert \omega_{n}\vert dx\\
&\leq \eta +C_{4}\Bigg(\int_{\mathbb{R}^{2}}\vert \omega_{n}\vert^{q'}dx\Bigg)^{\frac{1}{q'}}\Bigg(\int_{\mathbb{R}^{2}}(e^{4\pi(\sqrt{\tau q} u_{n}(x))^{2}}-1)dx\Bigg)^{\frac{1}{q}}\\
&\leq \eta+C_{4}\Bigg(\int_{\mathbb{R}^{2}}\vert \omega_{n}\vert^{q'}dx\Bigg)^{\frac{1}{q'}}\nonumber
\end{align*}
where $\frac{1}{q}+\frac{1}{q'}=1$. Since $q>1$ close to 1, $q'>2$, by the compact embedding $H^{1}_{r}(\mathbb{R}^{2})\hookrightarrow L^{r}(\mathbb{R}^{2})$ for all $r>2$, we get
\begin{align*}
\int_{\mathbb{R}^{2}}f(x, u_{n})\omega_{n}dx\rightarrow 0, \quad \text{as}\, n\rightarrow\infty.
\end{align*}
So, $\underset{n\rightarrow\infty}{\lim}\Vert \omega_{n}\Vert^{2}=0$. Moreover, by the argument in \cite{r4},  $u_{1}\in C^{2}(\mathbb{R}^{2})$. Since
$u_{1}$ is nonnegative, we have $u_{1}>0$ by the strong maximum principle and the proof is completed.\\
\end{proof}

\section{Proof of Theorem 1.2}
In this section, we deal with the problem (0.1) and show that there exist at least two nonnegative solutions, one is mountain-pass type solution,
another is a local minimal solution with negative energy. \\

The functional corresponding to problem (1.1) is
\begin{align*}
J_{\epsilon}(u)&=\frac{1}{2}\int_{\mathbb{R}^{2}}(\vert \nabla u\vert^{2}+u^{2})dx+\frac{\lambda}{2}\int_{\mathbb{R}^{2}}\frac{u^{2}}{\vert x\vert^{2}}\left(\int_{0}^{\vert x\vert}\frac{s}{2}u^{2}(s)\right)^{2}dx-\int_{\mathbb{R}^{2}}F(x, u)dx-\epsilon\int_{\mathbb{R}^{2}}kudx\\
&=\frac{1}{2}\int_{\mathbb{R}^{2}}(\vert \nabla u\vert^{2}+u^{2})dx+\frac{\lambda}{2}\int_{\mathbb{R}^{2}}\frac{u^{2}}{4\vert x\vert^{2}}\left(\frac{1}{2\pi}\int_{B_{\vert x\vert}}u^{2}\right)^{2}dx-\int_{\mathbb{R}^{2}}F(x, u)dx-\epsilon\int_{\mathbb{R}^{2}}kudx\\
\end{align*}
for $u\in H^{1}_{r}(\mathbb{R}^{2})$. It is easy to show that $J_{\epsilon}\in C^{1}(H^{1}_{r}(\mathbb{R}^{2}), \mathbb{R})$ with
\begin{equation}
J_{\epsilon}'(u)\phi=\int_{\mathbb{R}^{2}} \nabla u\nabla \varphi+u\varphi dx+\lambda\int_{\mathbb{R}^{2}}\left(\int_{\vert x\vert}^{\infty}\frac{h(s)}{s}u^{2}(s)ds\right)u\varphi+\frac{h^{2}(\vert x\vert)}{\vert x\vert^{2}}u\varphi dx-\int_{\mathbb{R}^{2}}f(x, u)\varphi dx -\epsilon\int_{\mathbb{R}^{2}}k\varphi dx
\end{equation}
for any $\varphi\in H^{1}_{r}(\mathbb{R}^{2})$. So, for searching the solutions of problem (0.1), we may seek the critical points of the functional $J_{\epsilon}$.\\

In the next two lemmas we check that the functional $J_{\epsilon}$ satisfies the geometric
conditions of the mountain-pass theorem.
\begin{lemma}
Let $(f_{1})$, $(f_{2})$ hold. Then there exists $\epsilon_{1}>0$ such that for $0<\epsilon<\epsilon_{1}$, there exists $\rho_{\epsilon}>0$
such that $J_{\epsilon}(u)>0$ if  $\Vert u\Vert=\rho_{\epsilon}$. Furthermore, $\rho_{\epsilon}$ can be chosen such that $\rho_{\epsilon}\rightarrow 0$
as $\epsilon\rightarrow 0$.
\end{lemma}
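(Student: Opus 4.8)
The plan is to view $J_\epsilon$ as the unperturbed functional $J$ perturbed by the bounded linear term $u\mapsto\epsilon\int_{\mathbb{R}^2}ku\,dx$, and to rerun the estimate of Lemma 2.5(1) while keeping track of this extra contribution. First I would discard the nonlocal Chern--Simons term in a lower bound, using that $c(u)\ge 0$, so that exactly the computation proving Lemma 2.5(1) yields, for $\Vert u\Vert$ small enough that the Trudinger--Moser hypothesis of Lemma 2.4 applies,
\[
J(u)=\frac12\Vert u\Vert^2+\frac{\lambda}{2}c(u)-\int_{\mathbb{R}^2}F(x,u)\,dx\ge \frac14\Vert u\Vert^2-C\Vert u\Vert^{q}
\]
for some $q>2$ and some $C>0$ independent of $u$.

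Next I would estimate the perturbation. Since $k\in H^{-1}$, duality gives $\bigl|\epsilon\int_{\mathbb{R}^2}ku\,dx\bigr|\le \epsilon\Vert k\Vert_{H^{-1}}\Vert u\Vert$, so on the sphere $\Vert u\Vert=\rho$ I obtain
\[
J_\epsilon(u)\ge \frac14\rho^2-C\rho^{q}-\epsilon\Vert k\Vert_{H^{-1}}\rho=\rho\Bigl(\tfrac14\rho-C\rho^{q-1}-\epsilon\Vert k\Vert_{H^{-1}}\Bigr).
\]
It therefore suffices to make the bracketed scalar quantity strictly positive for a suitable radius, which reduces the lemma to an elementary study of a one-variable function.

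To this end, set $g_0(\rho)=\tfrac14\rho-C\rho^{q-1}$. Because $q-1>1$, one has $g_0(0)=0$, $g_0$ is strictly increasing on an interval $[0,\bar\rho]$, and $g_0$ attains a strictly positive maximum $M_0=g_0(\bar\rho)$ at $\bar\rho=(4C(q-1))^{-1/(q-2)}$. I would then define $\epsilon_1:=M_0/(2\Vert k\Vert_{H^{-1}})$. For $0<\epsilon<\epsilon_1$ we have $2\epsilon\Vert k\Vert_{H^{-1}}<M_0$, so by continuity and monotonicity of $g_0$ on $[0,\bar\rho]$ there is a unique $\rho_\epsilon\in(0,\bar\rho)$ with $g_0(\rho_\epsilon)=2\epsilon\Vert k\Vert_{H^{-1}}$. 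For this radius the bracket equals $g_0(\rho_\epsilon)-\epsilon\Vert k\Vert_{H^{-1}}=\epsilon\Vert k\Vert_{H^{-1}}>0$, whence $J_\epsilon(u)>0$ whenever $\Vert u\Vert=\rho_\epsilon$. Finally, letting $\epsilon\to 0$ forces $g_0(\rho_\epsilon)=2\epsilon\Vert k\Vert_{H^{-1}}\to 0$; since $\rho_\epsilon$ is chosen on the increasing branch where $g_0$ is a continuous bijection onto $[0,M_0]$ sending $0$ to $0$, this yields $\rho_\epsilon\to 0$.

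The argument is essentially a routine perturbation of the mountain-pass geometry already established in Lemma 2.5, so I do not anticipate a serious obstacle. The only point that requires care is selecting $\rho_\epsilon$ on the \emph{increasing} branch of $g_0$ rather than on the decreasing branch (where small values of $g_0$ would correspond to radii near $\bar\rho$), since it is this choice that forces $\rho_\epsilon\to 0$; once $\rho_\epsilon\to 0$ is known, the smallness needed to legitimize the Trudinger--Moser estimate of Lemma 2.4 holds automatically for all sufficiently small $\epsilon$.
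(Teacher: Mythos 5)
Your proposal is correct and follows essentially the same route as the paper: the same growth estimate on $F$, discarding the nonnegative Chern--Simons term, the duality bound $\epsilon\vert\int ku\,dx\vert\le\epsilon\Vert k\Vert_{H^{-1}}\Vert u\Vert$, and the factorization $J_\epsilon(u)\ge\Vert u\Vert\bigl(\tfrac14\Vert u\Vert-C\Vert u\Vert^{q-1}-\epsilon\Vert k\Vert_{H^{-1}}\bigr)$. The only difference is that you carry out explicitly the one-variable analysis (choice of $\rho_\epsilon$ on the increasing branch of $g_0$, which is what forces $\rho_\epsilon\to 0$) that the paper merely asserts in one line.
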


\begin{proof}
As the same proof of Lemma 2.5, from $(f_{1})$ and $(f_{2})$, for $\forall 0<\eta<\frac{1}{4}$, there exist $C>0$ such that for $q>2$
\begin{equation*}
F(x, u) \leq \eta\vert u\vert^{2}+C\vert u\vert^{q}\Big(\exp(4\pi u^{2})-1\Big) \quad \text{for all}\, x\in \mathbb{R}^{2}\times \mathbb{R}.
\end{equation*}
So, by Lemma 2.4, we have
\begin{align*}
J_{\epsilon}(u)&\geq \frac{1}{2}\Vert u\Vert^{2}-\int_{R^{2}}F(x, u)dx-\epsilon\int_{R^{2}}k(x)udx\nonumber\\
&\geq  \frac{1}{2}\Vert u\Vert^{2}-\eta\int_{R^{2}}\vert u\vert^{2}dx-C\int_{R^{2}}\vert u\vert^{q}\left(\exp(4\pi u^{2})-1\right)dx-\epsilon\Vert k\Vert_{*}\Vert u\Vert\\
&\geq  \frac{1}{4}\Vert u\Vert^{2}-C\Big(\int_{R^{2}}\vert u\vert^{qs}dx\Big)^{\frac{1}{s}}\left(\int_{R^{2}}\left(\exp(4\pi u^{2})-1\right)^{r}dx\right)^{\frac{1}{r}}-\epsilon\Vert k\Vert_{*}\Vert u\Vert\\
&\geq  \frac{1}{4}\Vert u\Vert^{2}-C\Vert u\Vert^{q}-\epsilon\Vert k\Vert_{*}\Vert u\Vert, \quad \text{for}\,\, \Vert u\Vert=\rho\,\, \text{small enough}
\end{align*}
where $r>1$ close to 1, $s>1$ and $\frac{1}{r}+\frac{1}{s}=1$. Thus
\begin{align}
J_{\epsilon}(u)\geq\Vert u\Vert\left(\frac{1}{4}\Vert u\Vert-C\Vert u\Vert^{q-1}-\epsilon\Vert k\Vert_{*}\right).
\end{align}
 Since $q>2$, we may choose $\rho>0$ small enough such that $\frac{1}{4}\rho-C\rho^{q-1}>0$.
Thus, if $\epsilon>0$ is sufficiently small then we can find some $\rho_{\epsilon}>0$ such that $J_{\epsilon}(u)>0$ if  $\Vert u\Vert=\rho_{\epsilon}$ and $\rho_{\epsilon}\rightarrow 0$ as $\epsilon\rightarrow 0$.
\end{proof}

\begin{lemma}
There exists $u_{0}\in  H^{1}_{r}(\mathbb{R}^{2})$ with $\Vert u_{0}\Vert> \rho_{\epsilon}$, such that $J_{\epsilon}(u_{0})<\underset{\Vert u\Vert=\rho_{\epsilon}}{\inf}J_{\epsilon}(u)$.
\end{lemma}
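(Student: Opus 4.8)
The plan is to reuse the structure of part (2) of Lemma 2.5, exploiting that the only new feature of $J_{\epsilon}$ compared with $J$ is the linear perturbation $-\epsilon\int_{\mathbb{R}^{2}}ku\,dx$, which scales only linearly in the dilation parameter and is therefore harmless against the superlinear negative term supplied by $(f_{4})$.

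First I would fix an arbitrary positive $u\in H^{1}_{r}(\mathbb{R}^{2})\setminus\{0\}$ and estimate $J_{\epsilon}(tu)$ for $t>0$. Using $(f_{4})$, which yields $F(x,s)\geq \frac{C_{p}}{p}s^{p}$, together with the homogeneity $c(tu)=t^{6}c(u)$ of the nonlocal term (indeed $u^{2}$ contributes degree two and $(\int_{B_{|x|}}u^{2})^{2}$ contributes degree four), I obtain
\[
J_{\epsilon}(tu)\leq \frac{t^{2}}{2}\int_{\mathbb{R}^{2}}(|\nabla u|^{2}+u^{2})\,dx+\frac{\lambda t^{6}}{2}c(u)-\frac{C_{p}t^{p}}{p}\int_{\mathbb{R}^{2}}|u|^{p}\,dx-\epsilon t\int_{\mathbb{R}^{2}}ku\,dx.
\]
Since $p>6$, the term $-\frac{C_{p}t^{p}}{p}\int_{\mathbb{R}^{2}}|u|^{p}\,dx$ dominates every other contribution (the quadratic term, the degree-six nonlocal term, and the linear perturbation) as $t\to\infty$, and hence $J_{\epsilon}(tu)\to-\infty$.

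By Lemma 3.1 the quantity $m_{\epsilon}:=\inf_{\|u\|=\rho_{\epsilon}}J_{\epsilon}(u)$ is a fixed positive number, so it suffices to choose $t_{0}>0$ large enough that simultaneously $\|t_{0}u\|=t_{0}\|u\|>\rho_{\epsilon}$ and $J_{\epsilon}(t_{0}u)<m_{\epsilon}$; both are possible precisely because $J_{\epsilon}(t_{0}u)\to-\infty$ while the right-hand side is a fixed finite positive number. Setting $u_{0}=t_{0}u$ then gives the desired conclusion.

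As for the main obstacle, there is essentially none beyond bookkeeping. The only points that deserve a word of care are confirming the degree-six homogeneity of $c$, so that the strict inequality $p>6$ genuinely delivers dominance of the $-t^{p}$ term, and observing that the sign of the perturbation is immaterial: its contribution $-\epsilon t\int_{\mathbb{R}^{2}}ku\,dx$ grows only linearly and is thus swamped by $-t^{p}$ regardless of the sign of $\int_{\mathbb{R}^{2}}ku\,dx$ (and in particular of the hypothesis $k\geq 0$).
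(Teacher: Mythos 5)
Your proposal is correct and follows essentially the same route as the paper: the paper's proof also invokes $(f_{4})$ and the argument of Lemma 2.5(2) to conclude $J_{\epsilon}(tu)\to-\infty$ as $t\to\infty$ (the degree-six homogeneity $c(tu)=t^{6}c(u)$ being beaten by $p>6$, with the linear perturbation harmless), and then takes $u_{0}=tu$ for $t$ large. Your write-up merely makes explicit the details the paper leaves implicit, including why the target $\inf_{\Vert u\Vert=\rho_{\epsilon}}J_{\epsilon}(u)$ is finite so that a sufficiently large $t_{0}$ suffices.
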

\begin{proof}
Due to  $(f_{4})$, as the same proof of Lemma 2.1, we may show $J_{\epsilon}(tu)\rightarrow -\infty$ as $t\rightarrow \infty$. Setting
$u_{0}=tu$ with $t$ sufficiently large, we get the result.
\end{proof}

From Lemma 3.1 and Lemma 3.2, we can get a $(PS)_{c_{\epsilon}}$ sequence $\{u_{n}\}\subset  H^{1}_{r}(\mathbb{R}^{2})$ satisfying
\begin{align*}
J_{\epsilon}(u_{n})\rightarrow c_{\epsilon}>0\quad \text{and}\quad J_{\epsilon}'(u_{n})\rightarrow 0,
\end{align*}
where $c_{\epsilon}=\underset{\gamma\in\Gamma}{\inf}\underset{t\in [0, 1]}{\max}J_{\epsilon}(\gamma(t))>0$ and
\begin{align*}
\Gamma=\Big\{\gamma\in C([0, 1],  H^{1}_{r}(\mathbb{R}^{2})): \gamma(0)=0, J_{\epsilon}(\gamma(1))<0\Big\}.
\end{align*}

\begin{lemma}
 Assume that $(f_{1})$ hold, and let $\lambda_{1}>0$ be as in Lemma 2.7,  then there exists $0<\epsilon_{2}<\epsilon_{1}$ such that for any $0<\lambda<\lambda_{1}$ and $0<\epsilon<\epsilon_{1}$, $c_{\epsilon}<\frac{2-\sigma}{6}$.
\end{lemma}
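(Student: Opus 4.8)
The plan is to transfer the upper bound already established in Lemma 2.7 to the perturbed mountain--pass level $c_\epsilon$, exploiting the sign of the linear perturbation. Recall that in the proof of Lemma 2.7 one fixes the positive minimizer $v_p\in H^1_r(\mathbb R^2)$ for $S_p$ and, for $t_0$ large, the nonnegative path $\gamma_0(t)=t\,t_0v_p$ satisfies $J(\gamma_0(1))<0$ together with
\[
\max_{t\ge 0}J(tv_p)\le \frac{p-2}{p}\,\frac{S_p^{\frac{2p}{p-2}}}{C_p^{\frac{2}{p-2}}}<\frac{2-\sigma}{6}\qquad(0<\lambda<\lambda_1),
\]
the last inequality being exactly the consequence of the lower bound on $C_p$ in $(f_4)$.

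First I would record the pointwise comparison between the two functionals. Since $J_\epsilon(u)=J(u)-\epsilon\int_{\mathbb R^2}ku\,dx$ and $0\le k\in H^{-1}$, for the positive function $v_p$ one has $\int_{\mathbb R^2}k\,(tv_p)\,dx\ge 0$ for every $t\ge 0$, whence
\[
J_\epsilon(tv_p)=J(tv_p)-\epsilon t\!\int_{\mathbb R^2}kv_p\,dx\le J(tv_p),\qquad t\ge 0.
\]
In particular $J_\epsilon(\gamma_0(1))\le J(\gamma_0(1))<0$, so $\gamma_0$ remains an admissible competitor in the class $\Gamma$ defining $c_\epsilon$.

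It then remains only to choose $\epsilon_2$. Taking any $0<\epsilon_2\le\epsilon_1$ guarantees, by Lemmas 3.1 and 3.2, that the mountain--pass geometry of $J_\epsilon$ is in force and that $c_\epsilon$ is well defined; with this choice I would estimate, for $0<\lambda<\lambda_1$ and $0<\epsilon<\epsilon_2$,
\[
c_\epsilon\le\max_{t\in[0,1]}J_\epsilon(\gamma_0(t))=\max_{s\in[0,t_0]}J_\epsilon(sv_p)\le\max_{s\ge 0}J(sv_p)<\frac{2-\sigma}{6},
\]
which is the desired conclusion.

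The only point that genuinely requires care is the sign of the perturbation: the argument works precisely because $k$ is nonnegative and the comparison path is built from the positive test function $v_p$, so the extra linear term can never raise the energy along the path. Consequently the smallness of $\epsilon$ is needed solely to preserve the mountain--pass geometry of Lemma 3.1, not to beat the critical threshold $\frac{2-\sigma}{6}$, which is inherited verbatim from the unperturbed estimate of Lemma 2.7.
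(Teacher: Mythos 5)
Your proposal is correct, and it supplies a proof that the paper actually omits (the paper says only that the proof ``is similar to the one of Lemma 2.7''). In substance you follow the same strategy the paper intends: test the mountain-pass level with the ray through the $S_p$-minimizer $v_p$ and invoke the threshold computation $\max_{t\ge 0}J(tv_p)\le \frac{p-2}{p}\,S_p^{2p/(p-2)}/C_p^{2/(p-2)}<\frac{2-\sigma}{6}$ forced by $(f_4)$, valid for $0<\lambda<\lambda_1$. Where you genuinely differ is in the treatment of the perturbation term: the paper's insistence on a smaller parameter $\epsilon_2<\epsilon_1$ suggests the intended argument bounds $\epsilon\int_{\mathbb{R}^2}k\,tv_p\,dx$ by $\epsilon t\Vert k\Vert_{*}\Vert v_p\Vert$ and then shrinks $\epsilon$ so the maximum along the ray stays below the threshold, whereas you observe that $0\le k$ and $v_p>0$ make this term nonpositive, so that $J_\epsilon\le J$ along the whole ray, the path $\gamma_0$ remains admissible because $J_\epsilon(\gamma_0(1))\le J(\gamma_0(1))<0$, and the unperturbed bound transfers verbatim. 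This is cleaner and yields a slightly stronger conclusion: $c_\epsilon<\frac{2-\sigma}{6}$ for \emph{every} $0<\epsilon<\epsilon_1$, with no additional smallness beyond what Lemma 3.1 needs for the geometry --- which, incidentally, is what the lemma literally asserts (its $\epsilon_2$ then plays no real role). Two cosmetic points: to meet the stated requirement $0<\epsilon_2<\epsilon_1$ strictly you should take, say, $\epsilon_2=\epsilon_1/2$ rather than allow $\epsilon_2=\epsilon_1$; and the sign hypothesis you exploit is the one in Theorem 1.2 (there written $0\le h(x)\in H^{-1}$, the paper using $h$ and $k$ for the same perturbation), so it is worth stating explicitly that your argument is the only place where this sign enters.
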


\begin{lemma}
Suppose $(f_{1})-(f_{4})$ hold, and let $\lambda_{1}>0$ and $\epsilon_{2}>0$ be as in Lemma 3.3. Then for any $0<\lambda<\lambda_{1}$ and $0<\epsilon<\epsilon_{2}$, problem $(0.1)$
has a mountain pass type solution $u_{2}$.
\end{lemma}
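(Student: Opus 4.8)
The plan is to run, with the modifications forced by the linear perturbation $-\epsilon\int_{\mathbb{R}^2}ku\,dx$, the same scheme used for Theorem 1.1. By Lemmas 3.1 and 3.2 the functional $J_\epsilon$ has the mountain-pass geometry, so there is a $(PS)_{c_\epsilon}$ sequence $\{u_n\}\subset H^1_r(\mathbb{R}^2)$, which by the structure of $f$ may be taken nonnegative. First I would prove that $\{u_n\}$ is bounded: forming $6J_\epsilon(u_n)-J'_\epsilon(u_n)u_n$ exactly as in Lemma 2.6 and using $(f_3)$, the only new term is linear and is bounded by $5\epsilon\|k\|_*\|u_n\|$, so one reaches an inequality of the shape $(2-\sigma)\|u_n\|^2\le 6c_\epsilon+1+C\|u_n\|$ whose right-hand side is linear in $\|u_n\|$; this forces a uniform bound. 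Passing to a subsequence gives $u_n\rightharpoonup u_2$ in $H^1_r(\mathbb{R}^2)$, $u_n\to u_2$ in $L^s_{loc}(\mathbb{R}^2)$ for every $s\ge 1$ and a.e.\ in $\mathbb{R}^2$, with $u_2\ge 0$.

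The decisive point, and the main obstacle, is to convert the strict energy bound $c_\epsilon<\frac{2-\sigma}{6}$ of Lemma 3.3 into the size control $m:=\limsup_{n}\|u_n\|^2<1$, which is precisely what keeps the sequence inside the subcritical range of the Trudinger--Moser inequality (1.5) and thereby restores the compactness destroyed by the critical exponential growth. Proceeding as in the proof of Theorem 1.1, I would estimate $J_\epsilon(u_n)-\frac16 J'_\epsilon(u_n)u_n$ by means of $(f_3)$; the perturbation contributes only a term of order $\epsilon$, which is absorbed for $\epsilon<\epsilon_2$ small (shrinking $\epsilon_2$ if necessary), leaving $c_\epsilon\ge\frac{2-\sigma}{6}\,m$, hence $m\le\frac{6c_\epsilon}{2-\sigma}<1$.

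Knowing $m<1$, I would pass to the limit in the equation. Choosing $\tau>1$ and $q>1$ close to $1$ with $4\pi\tau q\,m<4\pi$, the inequality (1.5) controls the exponential term; combining $(f_1)$ with the uniform bound $\int_{\mathbb{R}^2}f(x,u_n)u_n\,dx\le C$ and Lemma 2.2 yields $\int_{\mathbb{R}^2}f(x,u_n)\varphi\,dx\to\int_{\mathbb{R}^2}f(x,u_2)\varphi\,dx$ for every $\varphi\in C^\infty_{0,r}(\mathbb{R}^2)$. The nonlocal term is handled by Lemma 2.1 and the fixed linear term $\epsilon\int_{\mathbb{R}^2}k\varphi\,dx$ passes trivially to the limit, so $J'_\epsilon(u_2)\varphi=0$ for all $\varphi\in C^\infty_{0,r}(\mathbb{R}^2)$, and by density for all $\varphi\in H^1_r(\mathbb{R}^2)$; thus $u_2$ is a nonnegative weak solution of $(0.1)$.

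Finally I would upgrade weak convergence to strong convergence, which secures nontriviality. Writing $u_n=u_2+\omega_n$ with $\omega_n\rightharpoonup 0$, the Br\'{e}zis--Lieb lemma gives $\|u_n\|^2=\|u_2\|^2+\|\omega_n\|^2+o_n(1)$; expanding $J'_\epsilon(u_n)u_n$ with Lemma 2.1, using the convergence $\int_{\mathbb{R}^2}f(x,u_n)u_2\,dx\to\int_{\mathbb{R}^2}f(x,u_2)u_2\,dx$ (obtained by the density argument behind (2.5)) and $\epsilon\int_{\mathbb{R}^2}k\omega_n\,dx\to 0$ (since $k\in H^{-1}$ and $\omega_n\rightharpoonup 0$), the problem reduces to showing $\int_{\mathbb{R}^2}f(x,u_n)\omega_n\,dx\to 0$. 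Because $m<1$, Lemma 2.4 together with the compact embedding $H^1_r(\mathbb{R}^2)\hookrightarrow L^{q'}(\mathbb{R}^2)$ for $q'>2$ gives exactly this, so $\|\omega_n\|\to 0$ and $u_n\to u_2$ strongly. Then $J_\epsilon(u_2)=c_\epsilon\ge\inf_{\|u\|=\rho_\epsilon}J_\epsilon(u)>0=J_\epsilon(0)$, whence $u_2\neq 0$ and $u_2$ is the desired nonnegative mountain-pass solution. Throughout, the genuine difficulty is the loss of compactness from the critical exponential nonlinearity, and everything hinges on the strict threshold of Lemma 3.3 that confines the $(PS)$ sequence to the subcritical Trudinger--Moser regime $m<1$.
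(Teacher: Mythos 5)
Your proposal is correct and takes exactly the route the paper intends: the paper in fact omits this proof entirely, stating only that it ``is similar to one of Theorem 1.1,'' and your argument is precisely that adaptation --- mountain-pass geometry from Lemmas 3.1--3.2, boundedness and the Trudinger--Moser threshold $\limsup_n\Vert u_n\Vert^2<1$ via $(f_3)$ and the level bound $c_\epsilon<\frac{2-\sigma}{6}$ of Lemma 3.3 (with the $O(\epsilon)$ linear term absorbed by shrinking $\epsilon_2$), followed by the same limit-passage and strong-convergence scheme (Lemmas 2.1, 2.2, 2.4, Br\'ezis--Lieb). Your write-up is, if anything, more careful than the paper, since it makes explicit the two points the paper glosses over: that the perturbation only contributes terms controlled by $\epsilon\Vert k\Vert_{*}\Vert u_n\Vert$, and that $\epsilon\int_{\mathbb{R}^2}k\omega_n\,dx\to 0$ by weak convergence.
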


The proof of Lemma 3.3 is similar to one of Lemma 2.7, the proof of Lemma 3.4 is similar to one of Theorem 1.1, so we omit them.\\

Now we prove the existence of a local minimal solution with the negative energy.
\begin{lemma}
There exists $\eta>0$ and $v\in H^{1}_{r}(\mathbb{R}^{2})$ with $\Vert v\Vert=1$ such that $J_{\epsilon}(tv)<0$ for all $0<t<\theta$.
In particular, $\underset{\Vert u\Vert=\theta}{\inf}J_{\epsilon}(u)<0$.
\end{lemma}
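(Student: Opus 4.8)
The plan is to extract the negativity of $J_\epsilon(tv)$ for small $t>0$ from the linear perturbation term $-\epsilon\int_{\mathbb{R}^2}kv\,dx$, which is the dominant contribution as $t\to 0^+$ and can be arranged to have a negative sign. The first step is to fix the test direction: since $0\le k\in H^{-1}$ with $k\not\equiv 0$ (and $k$ acting nontrivially on the radial subspace, as is implicit in the hypotheses of Theorem 1.2), there is a nonnegative radial $v\in H^1_r(\mathbb{R}^2)$ with $\Vert v\Vert=1$ and $\kappa:=\int_{\mathbb{R}^2}kv\,dx>0$. This is the only structurally important choice in the argument.

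Next I would estimate $J_\epsilon(tv)$ from above for $t>0$ small. Observe first that $F\ge 0$: indeed $f(x,\cdot)\equiv 0$ on $(-\infty,0)$ and $f(x,s)\ge C_p s^{p-1}\ge 0$ for $s\ge 0$ by $(f_4)$, so $F(x,s)=\int_0^s f(x,\tau)\,d\tau\ge 0$ for every $s$. Using this together with the scaling $c(tv)=t^6c(v)$ and $\Vert v\Vert=1$, I can simply discard the nonnegative term $\int_{\mathbb{R}^2}F(x,tv)\,dx$ to obtain
\[
J_\epsilon(tv)\le \frac{t^2}{2}+\frac{\lambda t^6}{2}c(v)-\epsilon\kappa t = t\Big(\frac{t}{2}+\frac{\lambda t^5}{2}c(v)-\epsilon\kappa\Big).
\]
As $t\to 0^+$ the bracketed expression tends to $-\epsilon\kappa<0$, so there is $\theta>0$ such that $\frac{t}{2}+\frac{\lambda t^5}{2}c(v)<\epsilon\kappa$ for all $0<t\le\theta$. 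For every such $t$ the right-hand side is strictly negative, which gives $J_\epsilon(tv)<0$.

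For the ``in particular'' assertion I would note that $\Vert\theta v\Vert=\theta\Vert v\Vert=\theta$, so $\theta v$ lies on the sphere of radius $\theta$ and, by the previous step, $\inf_{\Vert u\Vert=\theta}J_\epsilon(u)\le J_\epsilon(\theta v)<0$. I do not expect a serious obstacle here: once $F\ge 0$ is observed, the higher-order gauge term $\frac{\lambda}{2}c(tv)=O(t^6)$ and the quadratic term $\frac{t^2}{2}$ are both negligible against the linear term $-\epsilon\kappa t$ as $t\to 0^+$. The only point that genuinely requires care is the selection of $v$ so that $\int_{\mathbb{R}^2}kv\,dx>0$; this relies on the sign condition $k\ge 0$, $k\ne 0$ and on $k$ pairing nontrivially with radial functions, and it is precisely the presence of the perturbation $\epsilon k$ that makes the energy dip below zero near the origin.
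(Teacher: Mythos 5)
Your proof is correct, and it takes a genuinely different route from the paper's. The paper constructs $v$ via the Riesz representation theorem as the radial weak solution of $-\Delta v+v=k$, so that $\int_{\mathbb{R}^{2}}kv\,dx=\Vert v\Vert^{2}>0$ --- a choice that does not use the sign of $k$ --- and then differentiates: it computes $\frac{d}{dt}J_{\epsilon}(tv)$, argues ``by continuity'' from $f(x,0)=0$ that this derivative is negative on $(0,\theta)$, and concludes by integrating from $0$. You instead pick an abstract positive direction with $\kappa=\int_{\mathbb{R}^{2}}kv\,dx>0$ (legitimate: a nonnegative, nonzero element of $H^{-1}$ is a positive measure, so it pairs strictly positively with a strictly positive radial $v$, e.g.\ a normalized Gaussian), and you discard the nonlinearity wholesale via the sign observation $F\geq 0$, which indeed follows from $f\equiv 0$ on $(-\infty,0)$ together with $(f_{4})$. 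This yields the explicit bound $J_{\epsilon}(tv)\leq t\big(\tfrac{t}{2}+\tfrac{\lambda t^{5}}{2}c(v)-\epsilon\kappa\big)$ and an elementary conclusion. What your route buys: the paper's continuity step for $t\mapsto\int_{\mathbb{R}^{2}}f(x,tv)v\,dx$ at $t=0$ is not automatic --- making it rigorous requires the growth conditions $(f_{1})$--$(f_{2})$ and a dominated convergence argument, which the paper does not supply --- whereas your estimate bypasses that term entirely. What it costs: you invoke the sign hypotheses ($k\geq 0$, and $f\geq 0$ through $(f_{4})$), while the paper's Riesz choice of $v$ works for any $k$ that is nonzero as a functional on $H^{1}_{r}(\mathbb{R}^{2})$; note that both proofs equally need this last nondegeneracy (the pairing of $k$ with the radial subspace being nontrivial), which your parenthetical remark makes explicit and the paper leaves tacit. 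Your handling of the ``in particular'' claim --- establishing negativity on the closed interval $(0,\theta]$ and evaluating at $\theta v$ --- is also sound, and slightly more careful than the paper, whose statement only gives the open interval.
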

\begin{proof}
For each $k\in H^{-1}$, borrowing the Riesz representation theorem in the Hilbert space $H^{1}_{r}(\mathbb{R}^{2})$, the problem
\begin{align*}
-\Delta v+v=k, \quad\quad x\in \mathbb{R}^{2},
\end{align*}
has a unique weak solution $v$ in $H^{1}_{r}(\mathbb{R}^{2})$. Thus,
\begin{align*}
\int_{\mathbb{R}^{2}}kvdx=\Vert v\Vert^{2}>0 \quad\quad \text{for each}\,\, k\neq 0.
\end{align*}
Since $f(x, 0)=0$, by continuity, there exists $\theta>0$ such that
\begin{align*}
\frac{d}{dt}J_{\epsilon}(tv)=t\Vert v\Vert^{2}+3t^{5}c(v)-\int_{\mathbb{R}^{2}}f(x, tv)vdx-\epsilon\int_{\mathbb{R}^{2}}kvdx<0
\end{align*}
for all $0<t<\theta$. Since $J_{\epsilon}(0)=0$, it is clear that $J_{\epsilon}(tv)<0$ for all $0<t<\theta$.
\end{proof}

By inequality (3.2) and Lemma 3.5, one has
\begin{align}
-\infty<c_{1}=\underset{\Vert u\Vert\leq \rho_{\epsilon}}{\inf}J_{\epsilon}(u)<0.
\end{align}

\begin{lemma}
Le $\epsilon_{2}>0$ be as in Lemma 3.3 and  each $\epsilon$ with $0<\epsilon<\epsilon_{2}$, problem $(0.1)$ has a minimal type solution $u_{3}$
with  $J_{\epsilon}(u_{3})=c_{0}<0$
\end{lemma}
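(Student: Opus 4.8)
The plan is to realize $u_3$ as a constrained minimizer of $J_\epsilon$ on the closed ball $\bar{B}_{\rho_\epsilon}$ and then show this minimizer is an interior point, hence a genuine critical point. By (3.3) the number $c_0 := c_1 = \inf_{\|u\|\le \rho_\epsilon} J_\epsilon(u)$ satisfies $-\infty < c_0 < 0$, and since $J_\epsilon \in C^1(H^1_r(\mathbb{R}^2),\mathbb{R})$ it is continuous and bounded below on $\bar{B}_{\rho_\epsilon}$, which is a complete metric space for the norm distance. First I would invoke Ekeland's variational principle \cite{r10} to obtain a minimizing sequence $\{u_n\}\subset \bar{B}_{\rho_\epsilon}$ with $J_\epsilon(u_n)\to c_0$ and $J_\epsilon(w)\ge J_\epsilon(u_n)-\frac{1}{n}\|w-u_n\|$ for every $w\in \bar{B}_{\rho_\epsilon}$. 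As in the mountain-pass construction of Section 2, and using $k\ge 0$, the sequence may be taken nonnegative.

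Next I would argue that the minimizing sequence lives in the \emph{open} ball. By Lemma 3.1 one has $J_\epsilon(u)>0$ on the sphere $\|u\|=\rho_\epsilon$, whereas $J_\epsilon(u_n)\to c_0<0$; hence $\|u_n\|<\rho_\epsilon$ for $n$ large. Being interior points, the $u_n$ can be perturbed in every admissible direction, so the Ekeland inequality upgrades in the standard way to $J_\epsilon'(u_n)\to 0$ in $H^{-1}$. Thus $\{u_n\}$ is a $(PS)_{c_0}$ sequence, and it is automatically bounded since $\|u_n\|\le \rho_\epsilon$.

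I would then pass to the limit. Extract a subsequence with $u_n\rightharpoonup u_3$ in $H^1_r(\mathbb{R}^2)$, $u_n\to u_3$ in $L^s_{loc}(\mathbb{R}^2)$ for all $s\ge 1$ and a.e.; weak closedness of the ball gives $\|u_3\|\le \rho_\epsilon$. Using Lemma 2.1 to pass to the limit in $c(\cdot)$ and $c'(\cdot)$, the $L^1$ bound on $f(x,u_n)u_n$ (coming from $J_\epsilon'(u_n)u_n\to 0$ together with $(f_1)$, $(f_3)$ and boundedness) combined with Lemma 2.2 to get $f(x,u_n)\to f(x,u_3)$ in $L^1_{loc}$, and weak convergence for the linear term $\epsilon\int_{\mathbb{R}^2}ku_n\,dx$, I would conclude $J_\epsilon'(u_3)\varphi=\lim_n J_\epsilon'(u_n)\varphi=0$ for all $\varphi\in C^\infty_{0,r}(\mathbb{R}^2)$ and hence for all $\varphi\in H^1_r(\mathbb{R}^2)$ by density. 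So $u_3$ is a nonnegative weak solution of (0.1).

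Finally I would identify the energy level. Since $u_3\in \bar{B}_{\rho_\epsilon}$ we have $J_\epsilon(u_3)\ge c_0$. For the reverse inequality I would use weak lower semicontinuity of the quadratic part, continuity of $c$ (Lemma 2.1), the convergence $\int_{\mathbb{R}^2}F(x,u_n)\,dx\to \int_{\mathbb{R}^2}F(x,u_3)\,dx$, and $\int_{\mathbb{R}^2}ku_n\,dx\to \int_{\mathbb{R}^2}ku_3\,dx$ to get $J_\epsilon(u_3)\le \liminf_n J_\epsilon(u_n)=c_0$; therefore $J_\epsilon(u_3)=c_0<0$, and in particular $u_3\neq 0$ because $J_\epsilon(0)=0$. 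The main obstacle is precisely the convergence of the critical-growth term $\int_{\mathbb{R}^2}F(x,u_n)\,dx$ and the compactness needed to identify $J_\epsilon'(u_3)$. This is exactly where the small radius is decisive: by Lemma 3.1, $\rho_\epsilon\to 0$ as $\epsilon\to 0$, so $\sup_n\|u_n\|\le \rho_\epsilon<1$ keeps the sequence strictly below the Trudinger--Moser threshold in (1.5); together with Lemmas 2.3--2.4 and the compact embeddings $H^1_r(\mathbb{R}^2)\hookrightarrow L^q(\mathbb{R}^2)$ for $q>2$, this provides the uniform exponential integrability that lets me pass $\int_{\mathbb{R}^2}F(x,u_n)\,dx$ and $\int_{\mathbb{R}^2}f(x,u_n)\varphi\,dx$ to the limit.
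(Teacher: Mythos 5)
Your proposal is correct and follows essentially the same route as the paper: Ekeland's variational principle on the closed ball $\bar{B}_{\rho_\epsilon}$ (with $\rho_\epsilon<1$ so the Trudinger--Moser threshold is respected) produces a $(PS)_{c_1}$ sequence at the negative level, and the compactness machinery from the proof of Theorem 1.1 yields the solution $u_3$ with $J_\epsilon(u_3)=c_1<0$. The details you add --- the interior-point argument ($J_\epsilon>0$ on the sphere by Lemma 3.1 versus $c_1<0$) that upgrades Ekeland to $J_\epsilon'(u_n)\to 0$, and the identification of the energy level via weak lower semicontinuity plus the constraint $u_3\in\bar{B}_{\rho_\epsilon}$ rather than via strong convergence --- are refinements of steps the paper leaves implicit.
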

\begin{proof}
Let $\rho_{\epsilon}$ small be as in Lemma 3.1. We can choose that  $\rho_{\epsilon}<1$.
Since $\bar{B}_{\rho_{\epsilon}}$ is a complete metric space with the metric given by the norm of $H^{1}_{r}(\mathbb{R}^{2})$, convex
and the functional $J_{\epsilon}$ is a class of $C^{1}$ and bounded below on $\bar{B}_{\eta_{\epsilon}}$, by Ekeland's variational principle \cite{r10}
there exists a sequence $\{u_{n}\}$ in $\bar{B}_{\rho_{\epsilon}}$ such that
\begin{align*}
J_{\epsilon}(u_{n})\rightarrow c_{1}<0  \quad\text{and}\quad J'_{\epsilon}(u_{n})\rightarrow 0.
\end{align*}
By the proof of Theorem 1.1 it follows that there exists a subsequence of $\{u_{n}\}$ which converges to
a function $u_{3}$ . Therefore, $J_{\epsilon}(u_{3})=c_{1}<0$.

\end{proof}
\begin{proof}[\textbf{Proof of Theorem 1.2}]
From Lemma 3.4 and Lemma 3.6, there exist $\lambda_{1}>0$ and $\epsilon_{2}>0$ such that for any $0<\lambda<\lambda_{1}$ and $0<\epsilon<\epsilon_{2}$, there exist at least two solutions of problem (0.1), one is a mountain pass type solution, another is a local minimum solution with negative energy. Since $k(x)\geq 0$ almost everywhere in $\mathbb{R}^{2}$. Let $u\in H^{1}_{r}(\mathbb{R}^{2})$ be a weak solution of (0.1). Setting $u^{+}=\max\{u, 0\}$, $u^{-}=\max\{-u, 0\}$ and taking $v=u^{-}$ in (3.1), we obtain
\begin{align*}
\Vert u^{-}\Vert^{2}+3\lambda c(u^{-})=\epsilon \int_{\mathbb{R}^{2}}kvdx\leq 0,
\end{align*}
because $f(x, u(x))u^{-}=0$ in $\mathbb{R}^{2}$. So, $u=u^{+}\geq 0$.
We get two solutions are nonnegative, by the argument in \cite{r4},  these two solutions belong to $\in C^{2}(\mathbb{R}^{2})$. Moreover,
by the strong maximum principle, they are positive. We complete the proof.
\end{proof}

\section*{Acknowledgements}
The first author is supported by NSFC (No. 11301181) and China Postdoctoral Science Foundation funded project. And  the second author is supported by by Young Teachers Foundation  of BTBU (No. QNJJ2016-15).

\section*{References}


\begin{thebibliography}{10}

\addtolength{\itemsep}{-0.5 em} 
\setlength{\itemsep}{-1pt}


\bibitem{r1}H. Berestycki, T. Gallou\"{e}t, O. Kavian, \'{E}quations de Champs scalaires euclidiens non lin\'{e}aires dans le plan, C R. Acad. Sci. Paris S\'{e}r. I Math. 297(1983), 307-310  and Publications du Laboratoire d¡¯Analyse Num\'{e}rique, Universit\'{e} de Paris VI (1984).


\bibitem{r2}H. Berestycki, P.L. Lions, Nonlinear scalar field equations. I. Existence of a ground state, Arch. Rational Mech. Anal. 82(1983), 313-345.

\bibitem{r3}H. Berestycki, P.L. Lions,  Nonlinear scalar field equations. II. Existence of infinitely many solutions, Arch. Rational Mech. Anal. 82(1983), 347-375.

\bibitem{r4}J. Byeon, H. Huh, J. Seok,  Standing waves of nonlinear Schr\"{o}dinger equations with the gauge field, J. Funct. Anal. 263(2012), 1575-1608.

\bibitem{r5}D.M. Cao, Nontrivial solution of semilinear elliptic equation with critical exponent in $R^{2}$, Comm. Partial Differential Equations, 17(1992), 407-435.

\bibitem{r6}P, Cunha, P. d'Avenia, A. Pomponia, G.Siciliano, A multiplicity result for Chern-Simons-Schr\"{o}dinger equation with a general nonlinearity,
Nonlinear Differ. Equ. Appl. 22(2015), 1831-1850.


\bibitem{r7}D.G. de Figueiredo, O.H. Miyagaki, B.Ruf, Elliptic equations in $R^{2}$ with nonlinearities in the critical growth range,  Calc. Var. Partial Differential Equations, 3(1995), 139-153.

\bibitem{r8}J.M. Bezerra do \'{O}, N-Laplacian equations in $R^{N}$ with critical growth, Abstr. Appl. Anal. 2(3-4)(1997), 301-315.

\bibitem{r9}J.M. Bezerra do \'{O}, E. Medeiros, U. Severo, A nonhomogeneous elliptic problem involving critical growth in dimension two, J. Math. Anal. Appl. 345 (2008), 286-304.

\bibitem{r10}I. Ekeland, On the variational principle, J. Math. Anal. Appl. 17(1974), 324-35.


\bibitem{r11}R. Jackiw, S.-Y. Pi, Classical and quantal nonrelativistic Chern-Simons theory, Phys.Rev.D 42(1990), 3500-3513.


\bibitem{r12}R. Jackiw, S.Y. Pi, Self-dual Chern-Simons solitons, Progr. Theoret. Phys. Suppl. 107(1992), 1-40.

\bibitem{r13}Y.S. Jiang, A. Pomponio, D. Ruiz, Standing waves for a gauged nonlinear Schr\"{o}dinger equation with a vortex point, Commun. Contemp. Math. 18(2016), doi:10.1142/S0219199715500741.

\bibitem{r14}J. Moser, A sharp form of an inequality by N. Trudinger, Indiana Univ. Math. J. 20(1971), 1077-1092.

\bibitem{r15}A. Pomponio, D. Ruiz, A variational analysis of a gauged nonlinear Schr\"{o}dinger equation, J. Eur. Math. Soc. (JEMS), 17(6)(2015), 1463-1486.

\bibitem{r16}J. Seok, Infinitely Many Standing Waves for the Nonlinear Chern-Simons-Schr\"{o}dinger Equation, Adv. Math.Phys. 2015(2015), Articale ID 519374,7 pages.

\bibitem{r17}N.S. Trudinger, On the imbedding into Orlicz spaces and some applications, J. Math. Mech. 17(1967), 473-484.

\bibitem{r18}Y.Y. Wan, J.G. Tan, Standing waves for the Chern-Simons-Schr\"{o}dinger systems without (AR) condition, J. Math. Anal. Appl. 415(2014), 422-434.

\bibitem{r19}M. Willem, Minimax Theorems, Birkh\"{a}user, Boston, 1996.

\bibitem{r20}J.J. Yuan,  Mutiple normalized solutions of Chern-Simons-Schr\"{o}dinger system, Nonlinear Differ. Equ. Appl. 22(2015), 1801-1816.

\end{thebibliography}
\end{document}